\documentclass[12pt,a4paper,reqno]{amsart}
\usepackage{amssymb}
\usepackage{amscd}
\usepackage{psfig}
\numberwithin{equation}{section}

     \addtolength{\textwidth}{3 truecm}
     \addtolength{\textheight}{1 truecm}
     \setlength{\voffset}{-.6 truecm}
     \setlength{\hoffset}{-1.3 truecm}
     
\theoremstyle{plain}

\newtheorem{theorem}[subsection]{Theorem}

\newtheorem{lemma}[subsection]{Lemma}

\theoremstyle{definition}

\newtheorem{remark}[subsection]{Remark}


\renewcommand{\leq}{\leqslant}
\renewcommand{\geq}{\geqslant}

\newsavebox{\proofbox}
\savebox{\proofbox}{\begin{picture}(7,7)%
  \put(0,0){\framebox(7,7){}}\end{picture}}







\newcommand\lin{{\operatorname{lin}}}

\newcommand\R{\mathbb{R}}

\newcommand\eps{\varepsilon}



\parindent 0mm
\parskip   5mm

\begin{document}

\title[High exponent limit for NLW]{The high exponent limit $p \to \infty$ for the one-dimensional nonlinear wave equation}
\author{Terence Tao}
\address{Department of Mathematics, UCLA, Los Angeles CA 90095-1555}
\email{tao@@math.ucla.edu}
\subjclass{35L15}

\vspace{-0.3in}
\begin{abstract}
We investigate the behaviour of solutions $\phi = \phi^{(p)}$ to the one-dimensional nonlinear wave equation $-\phi_{tt} + \phi_{xx} = -|\phi|^{p-1} \phi$ with initial data $\phi(0,x) = \phi_0(x)$, $\phi_t(0,x) = \phi_1(x)$, in the high exponent limit $p \to \infty$ (holding $\phi_0, \phi_1$ fixed).  We show that if the initial data $\phi_0, \phi_1$ are smooth with $\phi_0$ taking values in $(-1,1)$ and obey a mild non-degeneracy condition, then $\phi$ converges locally uniformly to a piecewise limit $\phi^{(\infty)}$ taking values in the interval $[-1,1]$, which can in principle be computed explicitly.
\end{abstract}

\maketitle

\section{Introduction}

Consider solutions $\phi: \R \times \R \to \R$ to the defocusing nonlinear wave equation
\begin{equation}\label{nlw}
-\phi_{tt} + \phi_{xx} = |\phi|^{p-1} \phi
\end{equation}
where $p>1$ is a parameter.  From standard energy methods (see e.g. \cite{sogge:wave}), relying in particular on the conserved energy
\begin{equation}\label{energy}
E(\phi)(t) = \int_{\R} \frac{1}{2} |\phi_t|^2 + \frac{1}{2} |\phi_x|^2 + \frac{1}{p+1} |\phi|^{p+1}\ dx
\end{equation}
and on the Sobolev embedding $H^1_x(\R) \subset L^\infty_x(\R)$, we know that given any initial data $\phi_0 \in H^1_x(\R)$, $\phi_1 \in L^2_x(\R)$, there exists a unique global energy class solution $\phi \in C^0_t H^1_x \cap C^1_t L^2_x(\R \times \R)$ to \eqref{nlw} with initial data $\phi(0) = \phi_0$, $\phi_t(0) = \phi_1$.  One has a similar theory for data that is only locally of finite energy, thanks to finite speed of propagation.

In this paper we investigate the asymptotic behaviour of this solution $\phi = \phi^{(p)}$ in the high exponent limit\footnote{We are indebted to Tristan Roy for posing this question.} $p \to \infty$, while keeping the initial data fixed.  To avoid technicalities, let us suppose that $\phi_0, \phi_1$ are smooth and compactly supported, and that $|\phi_0(x)| < 1$ for all $x$.  Formally, we expect $\phi^{(p)}$ to converge in some sense to some solution $\phi = \phi^{(\infty)}$ of the ``infinitely nonlinear defocusing wave equation''
\begin{equation}\label{nlw-infty}
-\phi_{tt} + \phi_{xx} = |\phi|^{\infty} \phi
\end{equation}
with initial data $\phi(0) = \phi_0$, $\phi_t(0) = \phi_1$.

Of course, \eqref{nlw-infty} does not make rigorous sense.  But, motivated by analogy with infinite barrier potentials\footnote{For instance, if one takes the solution $\phi = \phi^{(p)}$ to the linear wave equation $-\phi_{tt}+\phi_{xx} = p 1_{\R \backslash [-1,1]}(x) \phi$ with initial data smooth and supported on $[-1,1]$, a simple compactness argument (or explicit computation) shows that $\phi$ converges (in, say, the uniform topology) to the solution to the free wave equation $-\phi_{tt}+\phi_{xx}=0$ on $\R \times [-1,1]$ with the reflective (Dirichlet) boundary conditions $\phi(t,\pm 1) = 0$.}, one might wish to interpret the infinite nonlinearity $|\phi|^\infty \phi$ as some sort of ``barrier nonlinearity'' which is constraining $\phi$ to have magnitude at most $1$, but otherwise has no effect.  Intuitively, we thus expect the limiting wave $\phi^{(\infty)}$ to evolve like the linear wave equation until it reaches the threshold $\phi^{(\infty)} = +1$ or $\phi^{(\infty)} = -1$, at which point it should ``reflect'' off the nonlinear barrier\footnote{Since each of the equations \eqref{nlw} are Hamiltonian, it is reasonable to expect that \eqref{nlw-infty} should also be ``Hamiltonian'' in some sense (although substituting $p=+\infty$ in \eqref{energy} does not directly make sense), and so energy should be reflected rather than absorbed by the barrier.}.  The purpose of this paper is to make the above intuition rigorous, and to give a precise interpretation for the equation \eqref{nlw-infty}.

\subsection{An ODE analogy}

To get some further intuition as to this reflection phenomenon, let us first study (non-rigorously) the simpler ODE problem, in which we look at solutions $\phi = \phi^{(p)}: \R \to \R$ to the ODE
\begin{equation}\label{ode}
 -\phi_{tt} = |\phi|^{p-1} \phi
 \end{equation}
with fixed initial data $\phi(0) = \phi_0, \phi_t(0) = \phi_1$ with $|\phi_0| \leq 1$, and with $p \to \infty$.  From the conserved energy $\frac{1}{2} \phi_t^2 + \frac{1}{p+1} |\phi|^{p+1}$ (and recalling that $(p+1)^{1/(p+1)} = 1 + \frac{\log p}{p} + O\left(\frac{1}{p}\right)$) we quickly obtain the uniform bounds
\begin{equation}\label{phito}
|\phi_t(t)| = O(1); \quad |\phi(t)| \leq 1 + \frac{\log p}{p} + O\left( \frac{1}{p} \right)
\end{equation}
for all $p$ and all times $t$, where the implied constants in the $O()$ notation depend on $\phi_0, \phi_1$.  Thus we already see a barrier effect preventing $\phi$ from going too far outside of the interval $[-1,1]$.  To investigate what happens near a time $t_0$ in which $\phi(t_0)$ is close to (say) $+1$, let us make the ansatz
$$ \phi(t) = p^{1/(p-1)} (1 + \frac{1}{p} \psi( p(t-t_0) )).$$
Observe from \eqref{phito} that $\phi(t)$ is positive for $|t-t_0| \leq c$ and some constant $c > 0$ depending only on $\phi_0,\phi_1$.
Write $s := p(t-t_0)$.  Some brief computation then shows that $\psi$ solves the equation
$$ \psi_{ss} = -(1 + \frac{1}{p} \psi)^p$$
for all $s \in [-cp,cp]$; also, by \eqref{phito} we obtain an upper bound $\psi \leq O(1)$ (but no comparable lower bound), as well as the Lipschitz bound $|\psi_s| = O(1)$.  In the asymptotic limit $p \to \infty$, we thus expect the rescaled solution $\psi = \psi^{(p)}$ to converge to a solution $\psi = \psi^{(\infty)}$ of the ODE
$$ \psi_{ss} = - e^\psi.$$
It turns out that this ODE can be solved explicitly\footnote{Alternatively, one can reach the desired qualitative conclusions by tracking the ODE along the energy surfaces $\frac{1}{2} \psi_s^2 + e^\psi = \operatorname{const}$ in phase space.}, and it is easy to verify that the general solution is
\begin{equation}\label{ode-sol}
 \psi(s) = \log \frac{2a^2}{\cosh^2(a(s-s_0))}
\end{equation}
for any $s_0 \in \R$ and $a > 0$.  These solutions asymptotically approach $-a|s-s_0| + \log 8a^2$ as $s \to \pm \infty$.  Thus we see that if $\psi$ is large and negative but with positive velocity, then the solution to this ODE will be approximately linear until $\psi$ approaches the origin, where it will dwell for a bounded amount of time before reflecting back into the negative axis with the opposite velocity to its initial velocity.  Undoing the rescaling, we thus expect the limit $\phi = \phi^{(\infty)}$ of the original ODE solutions $\phi^{(p)}$ to also behave linearly until reaching $\phi=+1$ or $\phi=-1$, at which point they should reflect with equal and opposite velocity, so that $\phi^{(\infty)}$ will eventually be a sawtooth function with range $[-1,1]$ (except of course in the degenerate case $\phi_1=0$, $|\phi_0| < 1$, in which case $\phi^{(\infty)}$ should be constant).  Because the ODE can be solved more or less explicitly using the conserved Hamiltonian, it is not difficult to formalise these heuristics rigorously; we leave this as an exercise to the interested reader.  Note that the above analysis also suggests a more precise asymptotic for how reflections of $\phi^{(p)}$ should behave for large $p$, namely (assuming $s_0=0$ for simplicity)
$$ \phi^{(p)}(t) \approx p^{1/(p-1)} \left(1 + \frac{1}{p} \log \frac{2a^2}{\cosh^2(ap(t-t_0))} \right)$$
or (after Taylor expansion)
\begin{equation}\label{phip}
 \phi^{(p)}(t) \approx 1 + \frac{\log p}{p} + \frac{1}{p} \log \frac{2a^2}{\cosh^2(ap(t-t_0))} ,
\end{equation}
where $a$ measures the speed of the reflection, and $t_0$ the time at which reflection occurs, and we are deliberately being vague as to what the symbol $\approx$ means.

Adapting the above ODE analysis to the PDE setting, we can now study the reflection behaviour of $\phi^{(p)}$ near the nonlinear barrier $\phi^{(p)} = 1$ at some point $(t_0,x_0)$ in spacetime by introducing the ansatz
$$ \phi(t,x) = p^{1/(p-1)} (1 + \frac{1}{p} \psi( p(t-t_0), p(x-x_0) )).$$
where $\psi$ can be computed to solve the equation
$$
- \psi_{tt} + \psi_{xx} = -(1 + \frac{1}{p} \psi)^p$$
in the region where $\phi$ is near $1$ (and is in particular non-negative).  In the limit $p \to \infty$, this formally converges to \emph{Liouville's equation}
\begin{equation}\label{psitx}
- \psi_{tt} + \psi_{xx} = e^\psi.
\end{equation}
Remarkably, this nonlinear wave equation can also be solved explicitly\cite{liouville}, with explicit solution
\begin{equation}\label{pde-sol}
\psi = \log \frac{-8 f'(t+x) g'(t-x)}{(f(t+x) + g(t-x))^2}
\end{equation}
for arbitrary smooth functions $f, g$ for which the right-hand side is well-defined\footnote{For instance, the ODE solutions \eqref{ode-sol} can be recovered by setting $f(u) := e^{a(u-t_0)}$ and $g(v) := e^{-a(v-t_0)}$.}.  Somewhat less ``magically'', one can approach the explicit solvability of this equation by introducing the null coordinates
\begin{equation}\label{null-coord}
u := t+x; \quad v := t-x 
\end{equation}
and their associated derivatives
\begin{equation}\label{null}
\partial_u := \frac{1}{2} (\partial_t + \partial_x); \quad \partial_v := \frac{1}{2} (\partial_t - \partial_x)
\end{equation}
and rewriting \eqref{psitx} as
\begin{equation}\label{psiexp}
\psi_{uv} = - \frac{1}{4} e^\psi
\end{equation}
and then noting the \emph{pointwise} conservation laws
\begin{equation}\label{cons}
 \partial_v (\frac{1}{2} \psi_u^2 - \psi_{uu} ) = \partial_u (\frac{1}{2} \psi_v^2 - \psi_{vv} ) = 0
\end{equation}
which can ultimately (with a certain amount of algebraic computation) be used to arrive at the solution \eqref{pde-sol}; see \cite{tao:post} for details.  Using this explicit solution, one can eventually be led to the (heuristic) conclusion that the reflection profile $\psi^{(\infty)}$ should resemble a Lorentz-transformed version of \eqref{ode-sol}, i.e.
\begin{equation}\label{lorentz}
\psi(t,x) = \log \frac{2a^2}{\cosh^2(a[(t-t_0)-v(x-x_0)]/\sqrt{1-v^2})}
\end{equation}
for some $t_0,x_0 \in\R$, $a > 0$, and $-1 < v < 1$.  Thus we expect $\phi$ to reflect along spacelike curves such as $(t-t_0) - v(x-x_0) = 0$ in order to stay confined to the interval $[-1,1]$.

\subsection{Main result} 

We now state the main result of our paper, which aims to make the above intuition precise.

\begin{theorem}[Convergence as $p \to \infty$]\label{main-thm}  Let $\phi_0, \phi_1: \R \to \R$ be functions obeying the following properties:
\begin{itemize}
\item[(a)] (Regularity) $\phi_0, \phi_1$ are smooth.
\item[(b)] (Strict barrier condition) For all $x \in \R$, $|\phi_0(x)| < 1$.
\item[(c)] (Non-degeneracy) The sets $\{ x: \frac{1}{2} (\phi_1 + \partial_x \phi_0)(x) = 0 \}$ and $\{ x: \frac{1}{2} (\phi_1 - \partial_x \phi_0)(x) = 0\}$ have only finitely many connected components in any compact interval\footnote{This condition is automatic if $\phi_0, \phi_1$ are real analytic, since the zeroes of non-trivial real analytic functions cannot accumulate.}.
\end{itemize}
For each $p > 1$, let $\phi^{(p)}: \R \times \R \to \R$ be the unique global solution to \eqref{nlw} with initial data $\phi^{(p)}(0) = \phi_0$, $\phi^{(p)}_t(0) = \phi_1$.  Then, as $p \to \infty$, $\phi^{(p)}$ converges uniformly on compact subsets of $\R \times \R$ to the unique function $\phi = \phi^{(\infty)}: \R \times \R \to \R$ that obeys the following properties:
\begin{itemize}
\item[(i)] (Regularity, I) $\phi$ is locally Lipschitz continuous (and in particular is differentiable almost everywhere, by Radamacher's theorem).
\item[(ii)] (Regularity, II) For each $u \in \R$ and $v \in \R$, the functions $t \mapsto \phi(t, u-t)$ and $t \mapsto \phi(t, t-v)$ are piecewise smooth (with finitely many pieces on each compact interval).
\item[(iii)] (Initial data) On a neighbourhood of the initial surface $\{ (0,x):x \in \R\}$, $\phi$ agrees with the linear solution
\begin{equation}\label{lin}
\phi^{(\lin)}(t,x) := \frac{1}{2} (\phi_0(x+t) + \phi_0(x-t)) + \frac{1}{2} \int_{x-t}^{x+t} \phi_1(y)\ dy
\end{equation}
the free wave equation with initial data $\phi_0, \phi_1$.
\item[(iv)] (Barrier condition) $|\phi(t,x)| \leq 1$ for all $t,x$.
\item[(v)] (Defect measure)  We have
\begin{equation}\label{defect}
-\phi_{tt} + \phi_{xx} = \mu_+ - \mu_-
\end{equation}
in the sense of distributions, where $\mu_+, \mu_-$ are locally finite non-negative measures supported on the sets $\{ (t,x): \phi(t,x) = +1 \}$, $\{ (t,x): \phi(t,x) = -1 \}$ respectively.
\item[(vi)] (Null energy reflection)  For almost every $(t,x)$, we have\footnote{Of course, we can compute the derivatives of $\phi^{(\lin)}$ explicitly from \eqref{lin} in terms of the initial data as $\phi^{(\lin)}_u(t,x) = \frac{1}{2} (\phi_1(0,x+t) + \partial_x \phi_0(0,x+t))$ and $\phi^{(\lin)}_v(t,x) = \frac{1}{2} (\phi_1(0,x-t) - \partial_x \phi_0(0,x-t))$.}
\begin{equation}\label{phiu}
|\phi_u(t,x)| = |\phi^{(\lin)}_u(t,x)|
\end{equation} 
and
\begin{equation}\label{phiv}
|\phi_v(t,x)| = |\phi^{(\lin)}_v(t,x)|
\end{equation} 
In particular, $|\phi_u|$ is almost everywhere equal to a function of $u$ only, and similarly for $|\phi_v|$.
\end{itemize}
\end{theorem}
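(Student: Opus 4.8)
\emph{Proof strategy.} By finite speed of propagation we may replace $\phi_0,\phi_1$ by smooth compactly supported truncations without changing $\phi^{(p)}$ on any prescribed compact set; this preserves (a)--(c) and, crucially, makes the initial energy \eqref{energy} bounded uniformly in $p$ (since $\tfrac{1}{p+1}\int|\phi_0|^{p+1}\to 0$ by (b)). The plan has three stages. First, extract $p$-uniform a priori bounds from the conserved energy and from the pointwise null conservation laws of the type \eqref{cons}, and use them to produce a locally uniform subsequential limit $\phi$ with properties (iii), (iv). Second, pass to the limit in \eqref{nlw}, written in the null coordinates \eqref{null-coord}, \eqref{null} as $\phi_{uv}=-\tfrac14|\phi|^{p-1}\phi$, to obtain the defect-measure structure (v) and the null energy reflection (vi); properties (i) and (ii) then follow formally from (vi). Third, show that (i)--(vi) determine $\phi$ uniquely, so that the whole family converges.

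\emph{Stage 1: a priori bounds and compactness.} From \eqref{energy}, $E(\phi^{(p)})(t)=E(\phi^{(p)})(0)=\tfrac12\|\phi_1\|_{L^2}^2+\tfrac12\|\partial_x\phi_0\|_{L^2}^2+o(1)=O(1)$, so $\|\phi^{(p)}_t(t)\|_{L^2},\|\phi^{(p)}_x(t)\|_{L^2}=O(1)$ and $\int_\R|\phi^{(p)}(t,x)|^{p+1}\,dx=O(p)$ uniformly for $|t|\le T$. If $|\phi^{(p)}(t_0,x_0)|\ge 1+\eps$ for some such $t_0$, then by Cauchy--Schwarz $|\phi^{(p)}(t_0,\cdot)|\ge 1+\tfrac\eps2$ on an interval of a fixed positive length, forcing $\int_\R|\phi^{(p)}(t_0)|^{p+1}$ to grow exponentially in $p$ --- contradicting the $O(p)$ bound; hence $\limsup_p\sup_{|t|\le T,x}|\phi^{(p)}(t,x)|\le 1$, which gives (iv) for the limit. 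The NLW analogue of \eqref{cons} is $\partial_v(\tfrac12\phi_u^2)+\tfrac{1}{4(p+1)}\partial_u(|\phi|^{p+1})=0$, and the standard energy-flux identity in a backward light cone (all terms nonnegative) yields, uniformly for $(u,v)$ in a compact set $K$, that $\int\phi^{(p)}_u(\cdot,v)^2$ along the $v$-segment from $\{t=0\}$ to $(u,v)$ is $O(1)$, and symmetrically for $\phi_v$ along $u$-segments. By the fundamental theorem of calculus this makes $\{\phi^{(p)}\}$ uniformly H\"older-$\tfrac12$ and uniformly bounded on compact sets, so along a subsequence $\phi^{(p)}\to\phi\in C^{0,1/2}_{\mathrm{loc}}$ locally uniformly. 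Near $\{t=0\}$, (b) and finite speed give $|\phi^{(p)}|\le 1-c$ for $p$ large, so the source is uniformly $O((1-c)^p)$ and $\phi^{(p)}\to\phi^{(\lin)}$ with its derivatives there --- this is (iii).

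\emph{Stage 2: defect measure and energy reflection.} Testing \eqref{nlw} against $\chi\in C^\infty_c$ supported away from $\{t=0\}$ and integrating by parts twice gives $\langle|\phi^{(p)}|^{p-1}\phi^{(p)},\chi\rangle=\langle\phi^{(p)},-\chi_{tt}+\chi_{xx}\rangle\to\langle\phi,-\chi_{tt}+\chi_{xx}\rangle$, so $|\phi^{(p)}|^{p-1}\phi^{(p)}\to-\phi_{tt}+\phi_{xx}=:\mu$ in $\mathcal D'$. On compact subsets of $\{|\phi|<1\}$ the source tends to $0$ uniformly, so $\mu$ is supported on $\{\phi=1\}\sqcup\{\phi=-1\}$; near $\{\phi=1\}$ (where $\phi^{(p)}>0$ for large $p$) $\mu$ is the weak-$*$ limit of the nonnegative functions $(\phi^{(p)})_+^p$, hence a nonnegative, thus locally finite, measure $\mu_+$ supported on $\{\phi=1\}$, and similarly $-\mu_-$ near $\{\phi=-1\}$: this is (v). Consequently $\iint_K|\phi^{(p)}|^p\to\mu_+(K)+\mu_-(K)<\infty$ (via a partition of unity), so $\tfrac1{p+1}|\phi^{(p)}|^{p+1}\le\tfrac{1+o(1)}{p+1}|\phi^{(p)}|^p\to 0$ in $L^1_{\mathrm{loc}}$; feeding this into the conservation law shows $\partial_v(\tfrac12(\phi^{(p)}_u)^2)\to 0$ in $\mathcal D'$. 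Since $(\phi^{(p)}_u)^2$ is bounded in $L^1_{\mathrm{loc}}(\R^2)$ it has (on a further subsequence) a weak-$*$ limit $\nu$ with $\partial_v\nu=0$, i.e.\ $\nu$ is independent of $v$; comparison with the data near $\{t=0\}$ identifies $\nu=|\phi^{(\lin)}_u(u)|^2\,du\,dv$, and weak lower semicontinuity gives $|\phi_u|\le|\phi^{(\lin)}_u|$ a.e. The reverse inequality --- that the reflections are \emph{elastic}, absorbing no null energy in the limit --- is the heart of the argument: away from the (shrinking) neighbourhoods of the barrier set the source in $\partial_v\phi^{(p)}_u=-\tfrac14|\phi^{(p)}|^{p-1}\phi^{(p)}$ is exponentially small, so $|\phi^{(p)}_u|$ is transported essentially unchanged, only its sign flipping across a barrier crossing, while (using the non-degeneracy (c)) the barrier set $\{\phi=1\}\cup\{\phi=-1\}$ is, in each compact region, a finite union of spacelike Lipschitz arcs together with ``dwelling'' regions on which $\phi^{(\lin)}_u\equiv\phi^{(\lin)}_v\equiv 0$; on all of these $|\phi_u|=|\phi^{(\lin)}_u|$ holds trivially, and on the complement (full measure off the arcs) $|\phi^{(p)}_u|\to|\phi^{(\lin)}_u|$ pointwise and, by domination, in $L^1_{\mathrm{loc}}$. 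This is \eqref{phiu}, and \eqref{phiv} follows symmetrically, giving (vi). Then $\nabla\phi\in L^\infty_{\mathrm{loc}}$, which is (i), and along each null line $t\mapsto\phi(t,u-t)$ satisfies $\tfrac{d}{dt}\phi=2\phi_u$ with $|\phi_u|$ a smooth function of $t$ vanishing on only finitely many intervals, hence is piecewise smooth, which is (ii).

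\emph{Stage 3: uniqueness, and the main obstacle.} Conversely, if $\phi$ obeys (i)--(vi), fix $v$ and set $y(t):=\phi(t,t-v)$; then $y$ is Lipschitz, $y$ agrees with $\phi^{(\lin)}(t,t-v)$ for $t$ near $0$, $|y'(t)|=2|\phi^{(\lin)}_u(2t-v)|$ for a.e.\ $t$ by (vi), $|y|\le 1$ by (iv), and by (v) the only way $y$ can fail to be monotone between zeros of $\phi^{(\lin)}_u(2t-v)$ is to reflect off $\pm1$ with reversed, equal-magnitude velocity. Since by (c) the set $\{t:\phi^{(\lin)}_u(2t-v)=0\}$ has finitely many components on each compact interval, this ``billiard'' Cauchy problem has a unique solution; as every spacetime point lies on such a line, $\phi$ is uniquely determined, so the subsequential limit of Stage 1--2 is in fact the full limit $\phi^{(\infty)}$. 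The step I expect to be hardest is the elasticity assertion in Stage 2 --- that in the limit $p\to\infty$ the barrier reflects the null energies $|\phi_u|^2$, $|\phi_v|^2$ \emph{exactly} rather than merely not increasing them. This forces one to control the geometry of the barrier set (the spacelike-arc decomposition, where hypothesis (c) is used essentially) and to quantify that each reflection lives in a spacetime tube of width $O(1/p)$ on which $\phi^{(p)}$ is modelled to leading order by the Lorentz-boosted Liouville profile \eqref{lorentz}, so that the fraction of null energy flux it absorbs tends to $0$; the uniqueness step likewise relies on (c) to exclude ambiguous tangential reflections.
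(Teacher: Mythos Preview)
Your overall architecture---compactness, verification of (i)--(vi) for subsequential limits, then uniqueness---matches the paper's, and your treatment of (iv), (v) and (iii) is essentially the same. The real gap is in (vi), precisely at the elasticity step you flag as hardest. You rely on the stress-energy identity $\partial_v(\tfrac12\phi_u^2)+\tfrac{1}{4(p+1)}\partial_u(|\phi|^{p+1})=0$, and the paper explicitly considers and rejects this route: even after showing $(\phi^{(p)}_u)^2\rightharpoonup|\phi^{(\lin)}_u|^2$ weakly, one cannot conclude $|\phi_u|^2=|\phi^{(\lin)}_u|^2$, because oscillation of the sign of $\phi^{(p)}_u$ as $p\to\infty$ can make $|\phi_u|$ strictly smaller than the weak limit of $|\phi^{(p)}_u|$. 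Your Stage~2 sketch asserts a spacelike-arc structure for the barrier set and pointwise convergence of $|\phi^{(p)}_u|$ off those arcs, but neither is justified---the barrier geometry is exactly what is unknown, and bounding the number of sign changes of $\phi^{(p)}_u$ along each null line, uniformly in $p$, is the whole difficulty.

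The paper's replacement is the \emph{second-order} pointwise quantity $\tfrac12\phi_u^2-\tfrac{1}{p}\phi_{uu}$, which (after the rescaling $\phi=p^{1/(p-1)}(1+\tfrac{1}{p}\psi)$ near the barrier) approximates the Liouville conservation law $\partial_v(\tfrac12\psi_u^2-\psi_{uu})=0$ of \eqref{cons}. To use it one must first upgrade the energy ($L^2$, hence $C^{0,1/2}$) control to $p$-uniform \emph{pointwise} bounds $|\phi_u|,|\phi_v|=O(1)$ and $|\phi_{uu}|,|\phi_{vv}|=O(p)$, obtained by a time-stepping continuity argument; your proposal never establishes these, which is also why you only get $C^{0,1/2}$ rather than Lipschitz at Stage~1. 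With these in hand, approximate conservation of $\tfrac12\phi_u^2-\tfrac{1}{p}\phi_{uu}$ along $v$-characteristics becomes an ODE for $\phi_u$ whose analysis yields the pointwise statement $|\phi^{(p)}_u\langle u,v\rangle|^2=|\phi^{(\lin)}_u\langle u,v\rangle|^2+O(\log p/p)$ for all $u$ outside at most $O(1)$ intervals of length $O_\eps(\log p/p)$, for each fixed $v$. This bounded-sign-change control in $u$, combined with monotonicity of $\phi^{(p)}_u$ in $v$ where $\phi^{(p)}>0$, is what rules out oscillation and upgrades weak convergence of $\phi^{(p)}_u$ to the a.e.\ identity $|\phi_u|=|\phi^{(\lin)}_u|$. (Your ``billiard'' uniqueness along single $v$-lines is also thinner than the paper's genuinely two-dimensional argument, which uses the rectangle inequality coming from (v) near $\phi=\pm 1$ together with (ii) and (vi); but this is secondary to the Stage~2 gap.)
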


\begin{remark} The existence and uniqueness of $\phi$ obeying the above properties is not obvious, but is part of the theorem.  The conditions (i)-(vi) are thus the rigorous substitute for the non-rigorous equation \eqref{nlw-infty}; they superficially resemble a ``viscosity solution'' or ``kinetic formulation'' of \eqref{nlw-infty} (see e.g. \cite{perthame}), and it would be interesting to see if there is any rigorous connection here to the kinetic theory of conservation laws.
\end{remark}

\begin{remark} The hypotheses (a), (b), (c) on the initial data $\phi_0, \phi_1$ are somewhat stronger than what is likely to be needed for the theorem to hold; in particular, one should be able to relax the strict barrier condition (b) to $|\phi_0(x)| \leq 1$, and also omit the non-degeneracy condition (c), although the conclusions (ii), (iii) the limit $\phi$ would have to be modified in this case; one also expects to be able to relax the smoothness assumption (a), perhaps all the way to the energy class or possibly even the bounded variation class.  We will not pursue these matters here.
\end{remark}

\subsection{An example}

\begin{figure}[tb]
\centerline{\psfig{figure=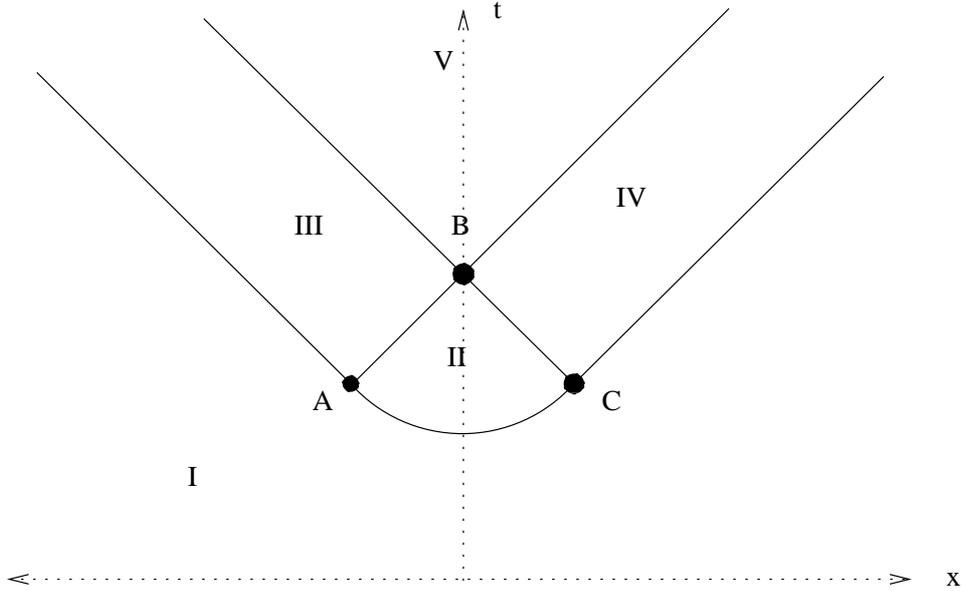}}
\caption{A subdivision of the triangular region $\Delta$ (the diagonal boundaries of $\Delta$ are beyond the scale of the figure).  We have $A = (2-\frac{1}{\sqrt{2}}, -\frac{1}{\sqrt{2}})$, $B = (2,0)$, and $C = (2-\frac{1}{\sqrt{2}},\frac{1}{\sqrt{2}})$.  The circular arc from $A$ to $C$ is part of the circle $\{ (t,x): (t-2)^2 + x^2 = 1 \}$.  The rays bounding regions III and IV are all null rays.}
\label{fig1}
\end{figure}

To illustrate the reflection in action, let us restrict attention to the triangular region
$$ \Delta := \{ (t,x): t \geq 0; |t-x|, |t+x| \leq 10 \}$$
and consider the initial data $\phi_0, \phi_1$ associated to the linear solution
$$ \phi^{(\lin)}(t,x) = 1 - \delta ( (t-2)^2 + x^2 - 1 ) $$
where $\delta > 0$ is a small constant (e.g. $\delta = 10^{-3}$ is safe).  Observe that $\phi^{(\lin)}$ lies between $-1$ and $1$ for most of $\Delta$, but exceeds $1$ in the disk $\{ (t,x): (t-2)^2 + x^2 < 1 \}$.  Thus we expect $\phi$ to follow $\phi^{(\lin)}$ until it encounters this disk, at which point it should reflect.

The actual solution $\phi$ can be described using Figure \ref{fig1}.  In region I, $\phi$ is equal to the linear solution $\phi^{(\lin)}$.  But $\phi^{(\lin)}$ exceeds $1$ once one passes the circular arc joining $A$ and $C$, and so a reflection must occur in region II; indeed, one has
$$ \phi(t,x) = 2 - \phi^{(\lin)}(t,x) = 1 + \delta ( (t-2)^2 + x^2 - 1 ) $$
in this region.  

Once the solution passes $A$ and $C$, though, it turns out that the downward velocity of the reflected wave is now sufficient to drag $\phi$ off of the singular set\footnote{An inspection of \eqref{lorentz} suggests that the singular set must remain spacelike, thus the timelike portions of the set $\{\phi^{(\lin)} = 1 \}$ (which, in this case, are the left and right arcs of the circle $\{ (t,x): (t-2)^2 + x^2 = 1 \}$) are not used as a reflective set for $\phi$.} $\{\phi=+1\}$ (which, in this example, is the circular arc connecting $A$ and $C$).   Indeed, in region III, we have
$$ \phi(t,x) = 1 + \delta(2(t-2)x - 1)$$
(note that this is the unique solution to the free wave equation that matches up with $\phi$ on regions I, II) and similarly on region IV we have
$$ \phi(t,x) = 1 + \delta(-2(t-2)x - 1).$$
Finally, on region V we have another solution to the free wave equation, which is now on a downward trajectory away from $\phi=+1$:
$$ \phi(t,x) = 1 - \delta( (t-2)^2 + x^2 + 1 ).$$
If one were to continue the evolution of $\phi$ forward in time beyond $\Delta$ (extending the initial data $\phi_0, \phi_1$ suitably), the solution would eventually hit the $\phi=-1$ barrier and reflect again, picking up further singularities propagating in null directions similar to those pictured here.  Thus, while the solution remains piecewise smooth for all time, we expect the number of singularities to increase as time progresses, due to the increasing number of reflections taking place.

It is a routine matter to verify that the solution presented here verifies the properties (i)-(vi) on $\Delta$ (if $\delta$ is sufficiently small), and so is necessarily the limiting solution $\phi$, thanks to Theorem \ref{main-thm} (and the uniqueness theory in Section \ref{unique-sec} below).  We omit the details.

\begin{remark} The circular arc between $A$ and $C$ supports a component of the defect measure $\mu_+$, which can be computed explicitly from the above formulae.  The defect measure can also be computed by integrating \eqref{defect} and observing that
$$
\phi\langle u, v \rangle - \phi\langle u-a, v \rangle - \phi\langle u, v-b \rangle + \phi\langle u-a, v-b \rangle = -\mu_+( \{ (u',v'): u-a \leq u' \leq u; v-b \leq v' \leq v \})$$
whenever $\langle u,v \rangle$, $\langle u-a,v\rangle$, $\langle u,v-b\rangle$, $\langle u-a,v-b\rangle$ are the corners of a small parallelogram intersecting this arc.  Sending $b \to 0$ (say), we observe that the left-hand side is asymptotic to $b(\phi_v \langle u, v \rangle - \phi_v \langle u-a, v \rangle)$.  Since $\phi_v$ reflects in sign across the arc, we can simplify this as $b |\phi^{(\lin)}_v\langle u,v\rangle|$.  This allows us to describe $\mu_+$ explicitly in terms of the conserved quantity $|\phi^{(\lin)}_v|$ and the slope of the arc; we omit the details.
\end{remark}

\begin{remark} The above example shows that the barrier set $|\phi|=1$ has some overlap initially with the set $|\phi^{(\lin)}|=1$, but the situation becomes more complicated after multiple ``reflections'' off of the two barriers $\phi=+1$ and $\phi=-1$, and the author does not know of a clean way to describe this set for large times $t$, although as the above example suggests, these sets should be computable for any given choice of $t$ and any given initial data.
\end{remark}

\subsection{Proof strategy}

We shall shortly discuss the proof of Theorem \ref{main-thm}, but let us first pause to discuss two techniques that initially look promising for solving this problem, but end up being problematic for a number of reasons.

Each of the nonlinear wave equations \eqref{nlw} enjoy a conserved stress-energy tensor $T^{(p)}_{\alpha \beta}$, and it is tempting to try to show that this stress-energy tensor converges to a limit $T^{(\infty)}_{\alpha \beta}$.  However, the author found it difficult to relate this limit tensor to the limit solution $\phi^{(\infty)}$.  The key technical difficulty was that while it was not difficult to ensure that derivatives $\phi^{(p)}_u, \phi^{(p)}_v$ of $\phi^{(p)}$ converged in a weak sense to the derivatives $\phi^{(\infty)}_u, \phi^{(\infty)}_v$ of a limit $\phi^{(\infty)}$, this did not imply that the \emph{magnitudes} $|\phi^{(p)}_u|$, $|\phi^{(p)}_v|$ of the derivatives converged (weakly) to the expected limit of $|\phi^{(\infty)}_u|$, $|\phi^{(\infty)}_v|$, due to the possibility of increasing oscillations in the sign of $\phi^{(p)}_u$ or $\phi^{(p)}_v$ in the limit $p \to \infty$ which could cause some loss of mass in the limit.  Because of this, much of the argument is instead focused on controlling this oscillation, and the stress-energy tensor conservation appears to be of limited use for such an objective.  Instead, the argument relies much more heavily on pointwise conservation (or almost-conservation) laws such as \eqref{cons}, and on the method of characteristics.

Another possible approach would be to try to construct an approximate solution (or parametrix) to $\phi^{(p)}$, along the lines of \eqref{phip}, and show that $\phi^{(p)}$ is close enough to the approximate solution that the convergence can be read off directly (much as it can be from \eqref{phip}).  While it does seem possible to construct the approximate solution more or less explicitly, the author was unable to find a sufficiently strong stability theory to then close the argument by comparing the exact solution to the approximate solution.  The difficulty is that the standard stability theory for \eqref{nlw} (e.g. by applying energy estimates to the difference equation) exhibits losses which grow exponentially in time with rate proportional to $p$, thus requiring the accuracy of the approximate solution to be exponentially small in $p$ before there is hope of connecting the approximate solution to the exact one.  Because of this, the proof below avoids all use of perturbation theory\footnote{Except, of course, for the fact that perturbation theory is used to establish global existence of the $\phi^{(p)}$.}, and instead estimates the nonlinear solutions $\phi^{(p)}$ directly. It may however be of interest to develop a stability theory for \eqref{nlw} which is more uniform in $p$ (perhaps using bounded variation type norms rather than energy space norms?).   One starting point may be the perturbation theory for \eqref{psitx}, explored recently in \cite{kal}.

Our arguments are instead based on a compactness method.  It is not difficult to use energy conservation to demonstrate equicontinuity and uniform boundedness in the $\phi^{(p)}$, so we know (from the Arzel\'a-Ascoli theorem) that the $\phi^{(p)}$ have at least one limit point.  It thus suffices to show that all such limit points obey the properties (i)-(vi), and that the properties (i)-(vi) uniquely determine $\phi$.  The uniqueness is established in Section \ref{unique-sec}, and is based on many applications of the method of characteristics.  To establish that all limit points obey (i)-(vi), we first establish in Section \ref{apriori} a number of \emph{a priori} estimates on the solutions $\phi^{(p)}$, in particular obtaining some crucial boundedness and oscillation control on $\phi$ and its first derivatives, uniformly in $p$.  In Section \ref{compact-sec} we then take limits along some subsequence of $p$ going to infinity to recover the desired properties (i)-(vi).

\begin{remark} It seems of interest to obtain more robust methods for proving results for infinite nonlinear barriers; the arguments here rely heavily on the method of characteristics and so do not seem to easily extend to, say, the $p \to \infty$ limit of the one-dimensional nonlinear Schr\"odinger equation $iu_t + u_{xx} = |u|^{p-1} u$, or to higher-dimensional non-linear wave equations.  In higher dimensions there is also a serious additional problem, namely that the nonlinearity becomes energy-critical in the limit $p \to \infty$ in two dimensions, and (even worse) becomes energy-supercritical for large $p$ in three and higher dimensions.  However, while global existence for defocusing supercritical non-linear wave equations from large data is a notoriously difficult open problem, there is the remote possibility that the asymptotic case $p \to \infty$ is actually better behaved than that of a fixed $p$.  At the very least, one should be able to \emph{conjecture} what the correct limit of the solution should be.
\end{remark}

\subsection{Acknowledgments}

We are indebted to Tristan Roy for posing this question, and Rowan Killip for many useful discussions, Ut V. Le for pointing out a misprint, to Patrick Dorey to pointing out the link to Liouville's equation, and to the anonymous referee for many useful comments and corrections. The author is supported by a grant from the MacArthur Foundation, by NSF grant DMS-0649473, and by the NSF Waterman award.

\section{Notation}

We use the asymptotic notation $X \ll Y$ to denote the bound $X \leq CY$ for some constant $C$ depending on fixed quantities (e.g. the initial data); note that $X$ may be negative, so that $X \ll Y$ only provides an upper bound.  We also use $O(X)$ to denote any quantity bounded in magnitude by $CY$ (thus we have both an upper and a lower bound in this case), and $X \sim Y$ for $X \ll Y \ll X$.  If the constant $C$ needs to depend on additional parameters, we will denote this by subscripts, e.g. $X \ll_\eps Y$ or $X = O_\eps(Y)$.

It is convenient to use both Cartesian coordinates $(t,x)$ and null coordinates $\langle u,v\rangle$ to parameterise spacetime.  To reduce confusion we shall use angled brackets to denote the latter, thus
$$ (t,x) = \langle t+x, t-x \rangle$$
and
$$ \langle u, v \rangle = \left( \frac{u+v}{2}, \frac{u-v}{2} \right).$$
Thus for instance we might write $\phi\langle u,v\rangle$ for $\phi( \frac{u+v}{2}, \frac{u-v}{2} )$.  

We will frequently rely on three reflection symmetries of \eqref{nlw} to normalise various signs: the \emph{time reversal symmetry}
\begin{equation}\label{time-reverse}
\phi(t,x) \mapsto \phi(-t,x)
\end{equation}
(which also swaps $u$ with $-v$), the \emph{space reflection symmetry}
\begin{equation}\label{space-reverse}
\phi(t,x) \mapsto \phi(t,-x)
\end{equation}
(which also swaps $u$ with $v$), and the \emph{sign reversal symmetry}
\begin{equation}\label{sign-reverse}
\phi(t,x) \mapsto -\phi(t,x).
\end{equation}

We will frequently be dealing with (closed) \emph{diamonds} in spacetime, which we define to be regions of the form 
$$\{ \langle u,v\rangle: u_0-r \leq u \leq u_0; v_0-r \leq v \leq v_0 \}$$
for some $u_0,v_0 \in \R$ and $r>0$.  One can of course define open diamonds similarly.  We will also be dealing with \emph{triangles}
$$\{ \langle u,v\rangle: u_0-r \leq u \leq u_0; v_0-r \leq v \leq v_0; \quad u+v \geq u_0+v_0-r \}$$
which are the upper half of diamonds.

\section{Uniqueness}\label{unique-sec}

In this section we show that there is at most one function $\phi: \R \to \R$ obeying the properties (i)-(vi) listed in Theorem \ref{main-thm}.  It suffices to prove uniqueness on a diamond region $\{ \langle u, v \rangle: |u|, |v| \leq T \}$ for any fixed $T > 0$; by the time reversal symmetry \eqref{time-reverse} it in fact suffices to prove uniqueness on a triangular region
$$ \Delta := \{ \langle u,v\rangle = (t,x): |u|, |v| \leq T; t \geq 0 \}.$$
Suppose for contradiction that uniqueness failed on $\Delta$, then there would exist two functions $\phi, \phi': \Delta \to \R$ obeying the properties (i)-(vi) in Theorem \ref{main-thm} which did not agree identically on $\Delta$.  

By property (iii), $\phi$ and $\phi'$ already agree on some neighbourhood of the time axis.  Since $\phi, \phi'$ are continuous by (i), and $\Delta$ is compact, we may therefore find some $(t_0,x_0) \in \Delta$ with $0 < t_0 < T$ such that $\phi(t,x) = \phi'(t,x)$ for all $(t,x) \in \Delta$ with $t \leq t_0$, but such that $\phi$ and $\phi'$ do not agree in any neighbourhood of $(t_0,x_0)$ in $\Delta$.  We will show that $\phi$ and $\phi'$ must in fact agree in some neighbourhood of $(t_0, x_0)$, achieving the desired contradiction.

First suppose that $|\phi(t_0,x_0)| < 1$, then of course $|\phi'(t_0,x_0)| < 1$.  As $\phi, \phi'$ are both continuous, we thus have $\phi, \phi'$ bounded away from $-1$ and $+1$ on a neighbourhood of $(t_0,x_0)$ in $\Delta$.  By (v), $\phi, \phi'$ both solve the free wave equation in the sense of distributions on this region, and since they agree below $(t_0,x_0)$, they must therefore agree on a neighbourhood of $(t_0,x_0)$ by uniqueness of the free wave equation, obtaining the desired contradiction.  Thus we may assume that $\phi(t_0,x_0)=\phi'(t_0,x_0)$ has magnitude $1$; by the sign reversal symmetry \eqref{sign-reverse} we may take 
$$ \phi(t_0,x_0) = \phi'(t_0,x_0) = +1.$$ 
By continuity, we thus see that $\phi$, $\phi'$ is positive in a neighbourhood of $(t_0,x_0)$; from (v) we conclude that $-\phi_{tt} + \phi_{xx}$ is a non-negative measure in this neighbourhood.  Integrating this, we conclude that
\begin{equation}\label{rect}
\phi\langle u, v \rangle - \phi\langle u-a, v \rangle - \phi\langle u, v-b \rangle + \phi\langle u-a, v-b \rangle \leq 0
\end{equation}
whenever $a,b > 0$ and $\langle u,v \rangle, \langle u-a, v \rangle, \langle u, v-b \rangle, \langle u-a, v-b \rangle \in \Delta$ lie sufficiently close to $(t_0,x_0)$; this implies in particular that $\phi_u$ is non-increasing in $v$ (and $\phi_v$ non-increasing in $u$) in this region whenever the derivatives are defined.  Similarly for $\phi'$.

\subsection{Extension to the right}

Write $\langle u_0,v_0\rangle := (t_0,x_0)$.  We already know that $\phi\langle u,v\rangle = \phi'\langle u,v\rangle$ when $\langle u,v\rangle$ is sufficiently close to $\langle u_0, v_0\rangle$ and $u+v \leq u_0+v_0$.  We now make extend this equivalence to the right of $u_0,v_0$:

\begin{lemma}\label{phiright} $\phi$ and $\phi'$ agree for all $\langle u, v\rangle$ sufficiently close to $\langle u_0,v_0\rangle$ in the region $u \geq u_0$, $v \leq v_0$.  
\end{lemma}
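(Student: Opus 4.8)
The plan is to reconstruct $\phi$ and $\phi'$ in the region $Q := \{\langle u,v\rangle : u \ge u_0,\ v \le v_0\}$ near $\langle u_0,v_0\rangle$ by the method of characteristics, exploiting the pointwise structure already in hand: by (vi), $|\phi_u| = h(u)$ and $|\phi_v| = k(v)$ almost everywhere, where $h := |\phi^{(\lin)}_u|$ and $k := |\phi^{(\lin)}_v|$ are fixed functions of one null variable; by (v) and the positivity of $\phi$ near $\langle u_0,v_0\rangle$, the measure $-\phi_{tt}+\phi_{xx}$ is non-negative there, so (as already noted after \eqref{rect}) $\phi_u$ is non-increasing in $v$ and $\phi_v$ is non-increasing in $u$; and away from $\{\phi = 1\}$ the function $\phi$ solves the free wave equation, so locally $\phi = F(u) + G(v)$ and hence $\phi_u$ does not depend on $v$, and $\phi_v$ does not depend on $u$, there. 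All of this holds for $\phi'$ as well, and we already know $\phi = \phi'$ both on $\{u = u_0\}$ near $\langle u_0,v_0\rangle$ and on the part of $Q$ with $u+v \le u_0+v_0$.

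First I would dispose of the degenerate possibilities using hypothesis (c): if $h \equiv 0$ on $(u_0, u_0+\delta)$ then $\phi_u = 0$ a.e.\ there, so $\phi\langle u, v\rangle = \phi\langle u_0, v\rangle$ for $u_0 < u < u_0+\delta$, and the same for $\phi'$, giving $\phi = \phi'$ at once; the case $k \equiv 0$ near $v_0$ is symmetric. Otherwise $h > 0$ on $(u_0, u_0+\delta)$ and $k > 0$ on $(v_0-\delta, v_0)$. Here I would pin down signs along the left edge: since $\phi \le 1$ with equality at $\langle u_0,v_0\rangle$ and $t \mapsto \phi(t, u_0-t)$ is piecewise smooth by (ii), the final smooth piece forces $\phi_v\langle u_0, v\rangle = +k(v)$ for $v$ just below $v_0$; by monotonicity of $\phi_v$ in $u$, for $u > u_0$ one still has $\phi_v\langle u,v\rangle \in \{+k(v), -k(v)\}$, and by the ``free wave $\Rightarrow \phi_v$ independent of $u$'' observation this sign can switch (necessarily from $+k$ to $-k$) only across $\{\phi = 1\}$; an analogous statement holds for $\phi_u$ in the $v$-direction.

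The heart of the argument is then a dichotomy for the set $\{\phi = 1\} \cap Q$ near $\langle u_0,v_0\rangle$ (which, by (ii) together with the exclusion of the degenerate cases, meets every null line in finitely many points). If this set is just the point $\langle u_0,v_0\rangle$, then for each $u \in (u_0, u_0+\delta)$ the sign of $\phi_u\langle u,\cdot\rangle$ never switches, so $\phi_u\langle u,\cdot\rangle$ is the constant $c(u)h(u)$ with $c(u) \in \{\pm 1\}$; since for $u$ close enough to $u_0$ the characteristic $\{u\}\times(v_0-\delta,v_0)$ still meets the region $\{u+v \le u_0+v_0\}$ where $\phi = \phi'$ is known, this constant equals the known value there, whence $c(u) = c'(u)$ and $\phi\langle u,v\rangle = \phi\langle u_0,v\rangle + \int_{u_0}^u c(u')h(u')\,du'$ is forced to coincide with the identical expression for $\phi'$. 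Otherwise $\{\phi = 1\}$ contains a (spacelike, in accordance with the remark around \eqref{lorentz}) arc $\gamma : v = \gamma(u)$ through $\langle u_0,v_0\rangle$ with $\gamma$ decreasing; differentiating $\phi\langle u, \gamma(u)\rangle \equiv 1$ using one-sided derivatives from the side of $\gamma$ abutting $\{u+v \le u_0+v_0\}$ yields a first-order ODE $\gamma'(u) = -\phi^{(\lin)}_u(u)/\phi^{(\lin)}_v(\gamma(u))$ (with signs inherited from the region where $\phi=\phi'$, hence the same for $\phi'$), which together with $\gamma(u_0)=v_0$ determines $\gamma$ uniquely in the non-degenerate regime; then $\phi'$ reflects off the same arc $\gamma'=\gamma$, and one reconstructs $\phi$ below $\gamma$ from the left-edge data and above $\gamma$ via $\phi\langle u,v\rangle = 1 - \int_{\gamma(u)}^{v} k(v')\,dv'$ (the sign of $\phi_v$ being reversed across $\gamma$), arriving at the same formulas for $\phi'$ and hence at $\phi=\phi'$ throughout $Q$ near $\langle u_0,v_0\rangle$.

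I expect the main obstacle to be the structural analysis of $\{\phi = 1\}$ near $\langle u_0,v_0\rangle$ — ruling out non-arc or non-spacelike behaviour and justifying the one-sided differentiation that produces the ODE for $\gamma$ — compounded by the circularity that the barrier location controls the sign data of $\phi_u,\phi_v$, which controls $\phi$, which in turn controls the barrier. This loop must be cut by propagating outward from the region where $\phi=\phi'$ is already known, and the argument will lean on the a priori regularity and oscillation estimates established in Section \ref{apriori}.
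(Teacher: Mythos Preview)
Your outline has the right instincts — use (vi) to reduce to sign data, handle degenerate null directions via (c), and exploit the monotonicity of $\phi_u,\phi_v$ coming from \eqref{rect} — but the core step is a genuine gap. The dichotomy ``either $\{\phi=1\}\cap Q$ is the single point $\langle u_0,v_0\rangle$, or it is a spacelike arc $v=\gamma(u)$'' is not something you can extract from (i)--(vi). Property (ii) only gives piecewise smoothness along each null line; it does not tell you that the contact set assembles into a curve, let alone a differentiable one on which you may run an ODE. And the circularity you flag is real: to derive $\gamma'(u) = -\phi^{(\lin)}_u(u)/\phi^{(\lin)}_v(\gamma(u))$ you need the signs of $\phi_u,\phi_v$ on the side of $\gamma$ abutting the known region, but those signs are only pinned down once you already know where $\gamma$ is and that $\phi$ agrees with the free solution up to it. Your proposed fix --- to ``lean on the a priori regularity and oscillation estimates established in Section~\ref{apriori}'' --- is not available here: those estimates are for the approximants $\phi^{(p)}$, whereas in the uniqueness argument $\phi$ and $\phi'$ are arbitrary functions obeying (i)--(vi), with no assumption that either arises as a limit of $\phi^{(p)}$.

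The paper avoids the geometry of $\{\phi=1\}$ altogether. In the hard case (your ``arc'' case, which in the paper is the case $\phi_u\langle u_0+a, v_0-b_0\rangle>0$) it introduces the free solution $\Phi$ on the parallelogram $P$ matching $\phi=\phi'$ on the two lower edges, and calls $\langle u,v\rangle$ \emph{good} if $\Phi\le 1$ throughout its past in $P$. A continuity argument gives $\phi=\phi'=\Phi$ at every good point. At a non-good point, a short contradiction argument using only \eqref{rect} and the barrier (iv) shows that $\phi_v\langle u,v\rangle$ cannot be positive: one finds a maximal-$\tilde v$ point with $\phi=1$ in the past, solves the free equation on the rectangle above it, and produces a value $>1$. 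Hence $\phi_v = -|\phi^{(\lin)}_v| = \phi'_v$ at every non-good point, and integrating in $v$ from the good region finishes. The point is that ``good vs.\ not good'' is defined via the explicitly known $\Phi$, not via the unknown set $\{\phi=1\}$, so no structural analysis of that set is needed. That device is the missing idea in your plan.
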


\begin{proof}  We may of course assume $u_0<T$ otherwise this extension is vacuous.

\begin{figure}[tb]
\centerline{\psfig{figure=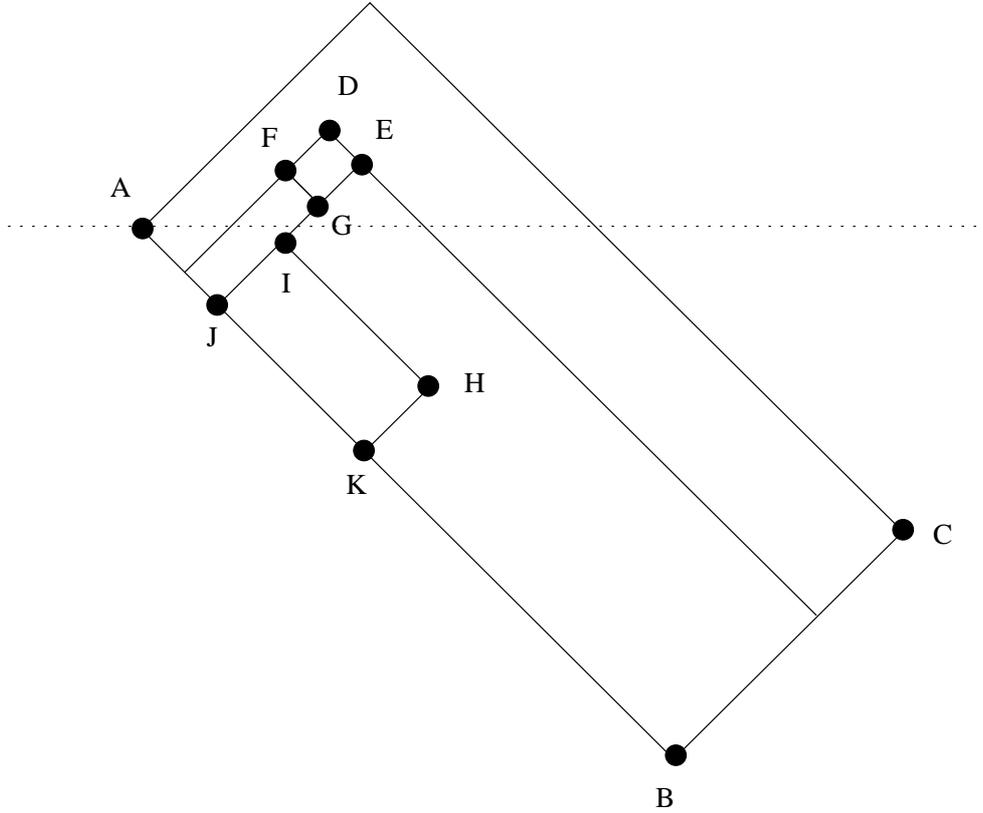}}
\caption{The proof of Lemma \ref{phiright} in the most difficult case.  $A = \langle u_0,v_0\rangle$, $B = \langle u_0,v_0-b_0\rangle$, $C = \langle u_0+a_0,v_0-b_0\rangle$, $D = \langle u, v \rangle$, $E = \langle u, v' \rangle$, $F  = \langle u', v \rangle$, $G = \langle u', v' \rangle$, $H = \langle \tilde u, \tilde v\rangle$, $I = \langle \tilde u, v' \rangle$, $J = \langle u_0, v' \rangle$, $K = \langle u_0, \tilde v \rangle$.  $\phi$ and $\phi'$ are already known to agree below the dotted line; the task is to extend this agreement to the parallelogram $P$ with $A,B,C$ as three of its corners. $\phi$ is also known in this case to be strictly decreasing from $A$ to $B$ and strictly increasing from $B$ to $C$.}
\label{fig2}
\end{figure}

Suppose first that $\phi\langle u_0, v_0-b\rangle = 1$ for a sequence of positive $b$ approaching zero.  Since $a \mapsto \phi\langle u_0,v_0-a\rangle$ is piecewise smooth by (ii), we see from the mean value theorem that $\phi_v\langle u_0, v_0-b'\rangle=0$ for a sequence of positive $b'$ approaching zero; by (vi) the same is true for $\phi^{(\lin)}$, which by the non-degeneracy assumption (c) implies that $\phi^{(\lin)}_v$ must in fact vanish on some left-neighbourhood of $v_0$, which by (vi) implies that $\phi_v\langle u,v\rangle$ and $\phi'_v\langle u, v\rangle$ vanish almost everywhere whenever $v$ is less than $v_0$ and sufficiently close to $v_0$.  Since $\phi, \phi'$ already agree for $u+v \leq u_0+v_0$, and are Lipschitz continuous, an integration in the $v$ direction then gives the lemma.

Now consider the opposite case, where $\phi\langle u_0, v_0-b\rangle < 1$ for all sufficiently small positive $b$.  As $b \mapsto \phi\langle u_0, v_0-b \rangle$ is piecewise smooth, and its stationary points have finitely many connected components in $\Delta$ by (vi), (c), we conclude that $\phi_v\langle u_0, v-b \rangle > 0$ for all $0 < b < b_0$ for some sufficiently small $b_0$.

By continuity, we can find a small $a_0 \in (0,b_0)$ such that $\phi\langle u_0+a, v-b_0\rangle < 1$ for all $0 < a < a_0$.  Since $a \mapsto \phi\langle u_0+a, v-b_0\rangle$ is piecewise smooth and the stationary points have finitely many connected components in $\Delta$, we see (after shrinking $a_0$ if necessary) that $\phi_u\langle u_0+a, v-b_0\rangle$ is either always positive, always negative, or always zero for $0 < a < a_0$.

If $\phi_u\langle u_0+a, v-b_0\rangle$ is always zero, then by (vi) we see that $\phi_u\langle u, v\rangle=\phi'_u\langle u, v\rangle=0$ almost everywhere for $u$ sufficiently close to and larger than $u_0$, and any $v$, and the lemma then follows from the fundamental theorem of calculus.

If $\phi_u\langle u_0+a, v-b_0\rangle$ is always negative, then as $\phi_u$ is non-increasing in $v$, we see that $\phi_u\langle u_0+a, v-b\rangle$ is negative for almost every $0 < a < a_0$ and $-b_0 < b < b_0$; combining this with (vi) we conclude that $\phi_u\langle u_0+a, v-b\rangle=\phi_u\langle u_0+a, v-b_0\rangle$ almost everywhere in this region, and similarly for $\phi'$.  In particular, $\phi_u$ and $\phi'_u$  agree in this region, and the lemma again follows from the fundamental theorem of calculus.

Finally, we handle the most difficult\footnote{Indeed, this is the one case where $\phi$ will reflect itself on some spacelike curve containing $(t_0,x_0)$, which is essentially the only interesting nonlinear phenomenon that $\phi$ can exhibit.} case, when $\phi_u\langle u_0+a, v-b_0\rangle$ is always positive.  Let $\Phi$ denote the unique Lipschitz-continuous solution to the free wave equation on the parallelogram $P := \{ \langle u, v \rangle: u_0 \leq u \leq u_0+a_0; v_0-b_0 \leq v \leq v_0 \}$ that agrees with $\phi$ (and hence $\phi'$) on the lower two edges of this parallelogram (i.e. when $u=u_0$ or $v=v_0-b_0$).  We will show that $\phi$ and $\phi'$ agree on $P$.

Call a point $\langle u,v\rangle \in P$ \emph{good} if we have $\Phi(u',v') \leq 1$ for all $\langle u',v' \rangle \in P$ with $u' \leq u, v' \leq v$; the set of good points is then a closed subset of $P$.  

We first observe that if $\langle u,v\rangle$ is good, then $\phi=\phi'=\Phi$.  Indeed, since $\phi_v$ is positive on the lower left edge of $P$, and $\phi_u$ is positive on the lower right edge, we see that $\Phi\langle u',v'\rangle < 1$ for all $\langle u',v' \rangle \in P$ with $u' \leq u, v' \leq v$, and $\langle u',v' \rangle \neq \langle u,v \rangle$.  Also, from (v), $\phi$ and $\phi'$ solve the free wave equation in the neighbourhood of any region near $\langle u_0, v_0 \rangle$ where they are strictly less than $1$.  A continuity argument based on the uniqueness of the free wave equation then shows that $\phi\langle u',v'\rangle = \phi'\langle u',v' \rangle = \Phi\langle u',v' \rangle$ for all $\phi_u\langle u_0+a, v-b_0\rangle$, and the claim follows.

Now suppose that $\langle u,v \rangle$ is not good, thus $\Phi\langle u,v\rangle > 1$.  Excluding a set of measure zero, we may assume that $v_0-b_0 < v < v_0$ and that $|\phi_v\langle u,v\rangle|$ exists and is equal to $|\phi_v\langle u_0, v\rangle|$, which is non-zero.  We claim that $\phi_v\langle u, v \rangle$ cannot be positive.  For if it is, then we can find $v'$ less than $v$ and arbitrarily close to $v$ such that
$$ \phi\langle u, v' \rangle < \phi\langle u, v \rangle.$$
(See Figure \ref{fig2}.)  Applying \eqref{rect}, we conclude that
$$ \phi\langle u', v' \rangle < \phi\langle u', v \rangle$$
for all $u_0 \leq u' \leq u$.  In particular, $\phi\langle u',v'\rangle$ must be bounded away from $1$.  Also, as $\Phi$ is continuous, we may also assume that
$$ \Phi\langle u, v' \rangle > 1.$$
In particular, $\phi$ and $\Phi$ disagree at $u, v'$.  This implies that $\phi\langle \tilde u, \tilde v \rangle = 1$ for at least one $u_0 \leq \tilde u \leq u$, $v_0-b_0 \leq \tilde v \leq v'$, since otherwise by (v) $\phi$ would solve the free wave equation in this region and thus be necessarily equal to $\Phi$ by uniqueness of that equation.  Among all such $\langle \tilde u, \tilde v \rangle$, we can (by continuity and compactness) pick a pair that maximises $\tilde v$.  Since $\phi\langle u',v'\rangle$ must be bounded away from $1$, we have $\tilde v < v'$.  From (v), $\phi$ solves the linear wave equation in the parallelogram $\{ \langle u'',v''\rangle: u_0 \leq u'' \leq \tilde u; \tilde v \leq v'' \leq v' \}$, which implies that
$$ \phi\langle \tilde u, v' \rangle - \phi \langle \tilde u, \tilde v \rangle - \phi \langle u_0, v' \rangle + \phi \langle u_0, \tilde v \rangle = 0.$$
But by the fundamental theorem of calculus, $\phi \langle u_0, v' \rangle > \phi \langle u_0, \tilde v \rangle$, and $\phi\langle \tilde u, \tilde v \rangle = 1$, hence $\phi\langle \tilde u, v' \rangle > 1$, contradicting (iv).  Hence $\phi_v\langle u,v \rangle$ cannot be positive, and hence by hypothesis must be equal to $-|\phi_v\langle u_0, v\rangle|$.  The same considerations apply to $\phi'$, and so $\phi_v = \phi'_v$ at almost every point in $P$ that is not good.

Since $\phi=\phi'$ at good points in $P$, and $\phi_v = \phi'_v$ on all other points of $P$, we obtain the lemma from the fundamental theorem of calculus.
\end{proof}

\subsection{Extension to the left}

Using space reflection symmetry \eqref{space-reverse}, we can reflect the previous lemma and conclude

\begin{lemma} $\phi$ and $\phi'$ agree for all $\langle u, v\rangle$ sufficiently close to $\langle u_0,v_0\rangle$ in the region $u \leq u_0$, $v \geq v_0$.  
\end{lemma}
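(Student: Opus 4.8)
\textbf{The plan} is to obtain this lemma as the mirror image of Lemma~\ref{phiright} under the space reflection symmetry \eqref{space-reverse}, so that no new analysis is required. Concretely, set $\tilde\phi(t,x) := \phi(t,-x)$ and $\tilde\phi'(t,x) := \phi'(t,-x)$. In null coordinates the substitution $x \mapsto -x$ interchanges $u$ and $v$, so $\tilde\phi\langle u,v\rangle = \phi\langle v,u\rangle$ and $\tilde\phi'\langle u,v\rangle = \phi'\langle v,u\rangle$. It then suffices to check that the pair $(\tilde\phi,\tilde\phi')$ inherits the full standing setup of Section~\ref{unique-sec} with base point $(t_0,-x_0)$, apply Lemma~\ref{phiright} to it, and translate the conclusion back.

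The verification that $(\tilde\phi,\tilde\phi')$ falls under the hypotheses of Lemma~\ref{phiright} is routine. Since \eqref{nlw} and the free wave equation are invariant under $x \mapsto -x$, and since \eqref{lin} transforms into the linear solution attached to the reflected data $\tilde\phi_0(x) = \phi_0(-x)$, $\tilde\phi_1(x) = \phi_1(-x)$, one checks directly that $\tilde\phi,\tilde\phi'$ satisfy (i)--(vi): (i) and (iv) are immediate; (ii) and (vi) are symmetric under the interchange $u \leftrightarrow v$ of the two null families and so persist; (iii) holds because free evolution commutes with reflection; and (v) holds because $-\partial_{tt}+\partial_{xx}$ is reflection-invariant, with the defect measures $\mu_\pm$ simply reflected and their supports staying inside $\{\tilde\phi = \pm 1\}$. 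The non-degeneracy hypothesis (c) is also preserved: reflection sends the zero set of $\tfrac12(\phi_1+\partial_x\phi_0)$ to that of $\tfrac12(\phi_1-\partial_x\phi_0)$ and conversely, and the finiteness-of-components condition does not care which of the two sets is which. Finally, the reflected base point $(t_0,-x_0)$ has null coordinates $\langle v_0,u_0\rangle$; the triangle $\Delta$ is invariant under $x \mapsto -x$ (this swaps $|u| = |t+x|$ with $|v| = |t-x|$ and fixes $t \geq 0$); the normalization $\tilde\phi(t_0,-x_0) = \tilde\phi'(t_0,-x_0) = +1$ follows from the corresponding fact for $\phi,\phi'$ at $(t_0,x_0)$; the agreement of $\phi,\phi'$ on $\{t \leq t_0\}\cap\Delta$ transfers to $\tilde\phi,\tilde\phi'$ since that region is reflection-invariant; and the monotonicity \eqref{rect}, being derived purely from (v) and positivity near the base point, holds for $\tilde\phi,\tilde\phi'$ near $\langle v_0,u_0\rangle$.

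Applying Lemma~\ref{phiright} to $(\tilde\phi,\tilde\phi')$ with base point $\langle v_0,u_0\rangle$ then yields that $\tilde\phi$ and $\tilde\phi'$ agree for all $\langle \tilde u,\tilde v\rangle$ sufficiently close to $\langle v_0,u_0\rangle$ with $\tilde u \geq v_0$ and $\tilde v \leq u_0$. Writing $\langle u,v\rangle := \langle \tilde v,\tilde u\rangle$ and using $\phi\langle u,v\rangle = \tilde\phi\langle v,u\rangle$, this is exactly the assertion that $\phi$ and $\phi'$ agree for $\langle u,v\rangle$ sufficiently close to $\langle u_0,v_0\rangle$ with $u \leq u_0$ and $v \geq v_0$, which is the lemma.

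\textbf{The main (and only) obstacle} here is bookkeeping rather than mathematics: one must keep straight how the reflection acts on the null coordinates, on the domain $\Delta$, and on the ``below the diagonal'' region $\{u+v \leq u_0+v_0\}$ (mercifully fixed, as $u \leftrightarrow v$ preserves $u+v$), so that the regions appearing in the hypothesis and conclusion of Lemma~\ref{phiright} are relabelled correctly, and one must confirm, as indicated above, that each of (i)--(vi) and (c) is genuinely stable under $x \mapsto -x$.
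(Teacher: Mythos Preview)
Your proposal is correct and follows exactly the approach of the paper, which simply invokes the space reflection symmetry \eqref{space-reverse} to reduce to Lemma~\ref{phiright}; you have merely spelled out in detail the bookkeeping that the paper leaves implicit.
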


\subsection{Extension to the future}

By the previous lemmas, we know that there exist $a_0, b_0 > 0$ such that $\phi\langle u_0+a,v_0 \rangle = \phi'\langle u_0+a,v_0\rangle$ for all $0 \leq a \leq a_0$ and $\phi\langle u_0,v_0+b \rangle = \phi'\langle u_0,v_0+b\rangle$ for all $0 \leq a \leq a_0$.  To finish the uniqueness claim, we need to show that $\phi, \phi'$ also agree in the future parallelogram $F := \{ \langle u,v\rangle: u_0 \leq u \leq u_0+a_0; v_0 \leq v \leq v_0+b_0 \}$.

As before, by shrinking $a_0$ if necessary, we know that $\phi_u\langle u_0+a,v_0\rangle$ is either always positive, always zero, or always negative for $0 < a < a_0$.  The former option is not possible from the barrier condition (iv) since $\phi\langle u_0,v_0\rangle = 1$, so $\phi_u\langle u_0+a,v_0\rangle$ is always non-positive.  Using the monotonicity of $\phi_u$ in $v$ and (vi) we thus conclude that $\phi_u\langle u_0+a,v_0+b\rangle = \phi_u\langle u_0+a,v_0\rangle$ for almost every $0 < a < a_0, 0 < b < b_0$, and similarly for $\phi'$, so that $\phi_u$ and $\phi'_u$ agree almost everywhere on $F$; similarly $\phi_v$ and $\phi'_v$ agree.  The claim now follows from the fundamental theorem of calculus, and the uniqueness claim is complete.

\begin{remark}  One could convert the above uniqueness results, with additional effort, into an existence result, but existence of a solution to (i)-(vi) will be automatic for us from the Arzel\'a-Ascoli theorem, as we will show that any uniformly convergent sequence of $\phi^{(p)}$ will converge to a solution to (i)-(vi).
\end{remark}

\section{A priori estimates}\label{apriori}

The next step in the proof of Theorem \ref{main-thm} is to establish various \emph{a priori} estimates on the solution to \eqref{nlw} on the diamond
$$ \diamondsuit_{T_0} := \{ \langle u, v \rangle: |u|, |v| \leq T_0 \}$$
for some large parameter $T_0$.  Accordingly, let us fix $\phi_0, \phi_1$ obeying the hypotheses (a),(b),(c) of Theorem \ref{main-thm}, and let $T_0 > 0$; we allow all implied constants to depend on the initial data $\phi_0, \phi_1$ and $T_0$, but will carefully track the dependence of constants on $p$.  We will assume that $p$ is sufficiently large depending on the initial data and on $T_0$; in particular, we may take $p \geq 100$ (say).
Standard energy methods (see e.g. \cite{sogge:wave}) then show that $\phi$ exists globally and is $C^{10}$ in $\diamondsuit_{T_0}$. This is sufficient to justify all the formal computations below.

We begin with a preliminary (and rather crude) H\"older continuity estimate, which we need to establish some spatial separation (uniformly in $p$) between the region where $\phi$ approaches $+1$, and the region where $\phi$ approaches $-1$.

\begin{lemma}[H\"older continuity]\label{hold}  For any $(x_1,t_1), (x_2,t_2) \in \diamondsuit_{T_0}$ we have
$$ |\phi(t_1,x_1) - \phi(t_2,x_2)| \ll |x_1-x_2|^{1/2} + |t_1-t_2|^{1/2}.$$
\end{lemma}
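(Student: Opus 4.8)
The plan is to prove the Hölder estimate via energy conservation, exploiting the fact that the conserved energy \eqref{energy} controls the $L^2$ norm of the gradient of $\phi$ on any time slice uniformly in $p$. First I would record the basic a priori bounds: from \eqref{energy} and the Sobolev embedding $H^1_x \subset L^\infty_x$ (plus finite speed of propagation, since $\phi_0, \phi_1$ are compactly supported) one gets $E(\phi)(t) = E(\phi)(0) = O(1)$ uniformly in $p$ and $t$, hence $\int |\phi_t(t,x)|^2 + |\phi_x(t,x)|^2\ dx = O(1)$ for every $t$. The potential term $\frac{1}{p+1}\int |\phi|^{p+1}$ is nonnegative so it only helps; in fact it also gives the pointwise bound $\|\phi(t)\|_{L^\infty} \leq 1 + O(\log p / p) = O(1)$ as in \eqref{phito}, which we may use freely.

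The spatial Hölder bound at fixed time is then immediate from Cauchy--Schwarz: for $x_1 < x_2$ on the same slice $t = t_1$,
$$ |\phi(t_1,x_1) - \phi(t_1,x_2)| = \left| \int_{x_1}^{x_2} \phi_x(t_1,y)\ dy \right| \leq |x_1-x_2|^{1/2} \left( \int_{\R} |\phi_x(t_1,y)|^2\ dy \right)^{1/2} \ll |x_1-x_2|^{1/2}. $$
For the temporal bound I would like to repeat the same argument integrating $\phi_t$ in $t$, but here is the one genuine subtlety: the energy controls $\int |\phi_t(t,x)|^2\ dx$ for each \emph{fixed} $t$, whereas to estimate $|\phi(t_1,x) - \phi(t_2,x)| = |\int_{t_1}^{t_2} \phi_t(\tau,x)\ d\tau|$ at a fixed $x$ I would need a bound on $\int_{t_1}^{t_2}|\phi_t(\tau,x)|\ d\tau$, which is a slice in the orthogonal direction and is not directly furnished by the energy. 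The standard way around this is to average in $x$: integrate over a spatial interval of length comparable to $|t_1-t_2|$, apply Fubini and Cauchy--Schwarz in the $(\tau,y)$ variables over the rectangle $[t_1,t_2]\times[x, x+|t_1-t_2|]$ to bound the average of $|\phi(t_1,y)-\phi(t_2,y)|$ by $|t_1-t_2|^{1/2}(\int\!\!\int |\phi_t|^2)^{1/2} \ll |t_1-t_2|^{1/2} \cdot |t_1-t_2|^{1/2}$... wait, that double integral is $O(|t_1-t_2|)$, giving an average bound of $O(|t_1-t_2|)$, too strong to be right — so instead one combines the averaged temporal bound with the already-established spatial Hölder bound to remove the average: pick a good $y$ in the interval where $|\phi(t_1,y)-\phi(t_2,y)|$ is at most its average, then add $|\phi(t_i,x)-\phi(t_i,y)| \ll |x-y|^{1/2} \leq |t_1-t_2|^{1/2}$ for $i=1,2$. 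Reassembling, the spatial and temporal contributions each cost $|t_1-t_2|^{1/2}$, giving the claimed bound, and then a triangle-inequality step handles a general pair $(t_1,x_1), (t_2,x_2)$ by going through the intermediate point $(t_1, x_2)$.

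The main obstacle is precisely this directional asymmetry of the energy bound — the energy lives on spacelike slices and controls $\phi_x$ and $\phi_t$ there, but the naive $\int \phi_t\ dt$ estimate in the timelike direction is not available pointwise in $x$. The fix (averaging over a space interval of the right length, then correcting via the spatial estimate) is routine once identified, and nothing here is sensitive to $p$ since all constants come only from $E(\phi)(0)$ and the support of the data; this is why the lemma, though "crude," holds uniformly in $p$. One should also note the nonlinear potential energy is simply discarded (it has a favorable sign), so the proof is in effect the standard free-wave Hölder estimate applied with a $p$-uniform energy bound.
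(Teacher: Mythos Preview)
Your proposal is correct and follows essentially the same route as the paper: the uniform energy bound gives spatial $C^{1/2}$ via Cauchy--Schwarz, and the temporal $C^{1/2}$ is obtained by averaging $\int \phi_t\,dt$ over a spatial interval of width $\sim |t_1-t_2|$ and then correcting with the spatial estimate. The only cosmetic difference is that the paper uses the \emph{local} energy on $\diamondsuit_{T_0}$ and its flux monotonicity rather than the global conserved energy, which sidesteps any appeal to compact support of the data.
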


\begin{proof}
We use the monotonicity of local energy
$$ E(t) := \int_{x: (t,x) \in \diamondsuit_{T_0}} \frac{1}{2} \phi_t^2 + \frac{1}{2} \phi_x^2 + \frac{1}{p+1} |\phi|^{p+1}\ dx.$$
The standard energy flux identity shows that $E(t) \leq E(0)$ for all $t$.  From the hypotheses (a), (b) we have
$$ |E(0)| \ll 1$$
(note in particular the uniformity in $p$) and thus by energy monotonicity
\begin{equation}\label{eo}
 |E(t)| \ll 1
\end{equation}
for all $t$.  By Cauchy-Schwarz, we see in particular that we have some H\"older continuity in space, or more precisely that
\begin{equation}\label{phitx}
|\phi(t,x) - \phi(t,x')| \ll |x-x'|^{1/2}
\end{equation}
whenever $(t,x), (t,x') \in \diamondsuit_{T_0}$.  We can also get some H\"older continuity in time by a variety of methods.  For instance, from Cauchy-Schwarz again we have
$$ |\phi(t_1,x) - \phi(t_2,x)|^2 \leq |t_1-t_2| (\int_{t_1}^{t_2} \phi_t^2\ dt)$$
for any $t_1 < t_2$ and any $x$; integrating this in $x$ on some interval $[x_0-r,x_0+r]$ and using \eqref{eo} we obtain
$$ \int_{x_0-r}^{x_0+r} |\phi(t_1,x) - \phi(t_2,x)|^2\ dx \ll |t_1-t_2|^2$$
when $(t_1,x_0-r), (t_1,x_0+r), (t_2,x_0-r), (t_2,x_0+r) \in \diamondsuit_{T_0}$.
On the other hand, from \eqref{phitx} we have
$$ |\phi(t_1,x_0) - \phi(t_2,x_0)|^2 \ll |\phi(t_1,x) - \phi(t_2,x)|^2 + r$$
and thus
$$ 2 r |\phi(t_1,x_0) - \phi(t_2,x_0)|^2 \ll |t_1-t_2|^2 + r^2.$$
If we optimise $r := |t_1-t_2|$, we obtain
$$ |\phi(t_1,x_0) - \phi(t_2,x_0)| \ll |t_1-t_2|^{1/2};$$
combining this with \eqref{phitx} we obtain the claim (possibly after replacing $T_0$ with a slightly larger quantity in the above argument).
\end{proof}

Next, we express the equation \eqref{nlw} in terms of the null derivatives \eqref{null} as
\begin{equation}\label{phiuv}
 \phi_{uv} = - \frac{1}{4} |\phi|^{p-1} \phi.
\end{equation}
We can use this to give some important pointwise bounds on $\phi$ and its derivatives.  For any time $-T_0 \leq t_0 \leq T_0$, let $K(t_0)$ be the best constant such that
\begin{equation}\label{phikk}
 |\phi(t_0,x)| \leq 1 + \frac{\log p}{p} + \frac{K}{p}
\end{equation}
and
\begin{equation}\label{phik}
 |\phi_u(t_0,x)|, |\phi_v(t_0,x)| \leq K
\end{equation}
and
\begin{equation}\label{phikkk}
 |\phi_{uu}(t_0,x)|, |\phi_{vv}(t_0,x)| \leq Kp
\end{equation}
for all $x$ with $(t_0,x) \in \diamondsuit_{T_0}$ (compare with \eqref{phito}).  Thus for instance $K(0) \ll 1$.  We now show that the \eqref{phikk} component of $K(t_0)$, at least, is stable on short time intervals:

\begin{lemma}[Pointwise bound]\label{pond}  There exists a time increment $\tau > 0$ (depending only on the initial data and $T_0$) such that
$$ |\phi(t_1,x_1)| \leq 1 + \frac{\log p}{p} + O_{K(t_0)}( \frac{1}{p} )$$
for any $-T_0 \leq t_0 \leq T_0$ and $(t_1, x_1) \in \diamondsuit_{T_0}$ with $|t_1-t_0| \leq \tau$, and either $t_1 \geq t_0 \geq 0$ or $t_1 \leq t_0 \leq 0$. 
\end{lemma}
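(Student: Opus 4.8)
The plan is to reduce to a one-sided estimate in which the nonlinearity has a definite sign, and then run the estimate along the backward null characteristics issuing from the time-$t_0$ slice, using \eqref{phiuv}, the H\"older continuity of Lemma \ref{hold}, and the pointwise conservation laws.

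\emph{Reductions and set-up.} By the time reversal symmetry \eqref{time-reverse} (which interchanges the cases $t_1\ge t_0\ge 0$ and $t_1\le t_0\le 0$) I may assume $t_1\ge t_0\ge 0$, and by the sign reversal symmetry \eqref{sign-reverse} it suffices to prove the upper bound $\phi(t_1,x_1)\le 1+\frac{\log p}{p}+O_{K(t_0)}(1/p)$ (applying it to $-\phi$ then recovers the lower bound). If $\phi(t_1,x_1)\le 1+\frac{\log p}{p}$ there is nothing to prove, so I may assume $\phi(t_1,x_1)>1$. I would then fix $\tau>0$ small, depending only on the constant in Lemma \ref{hold} (hence only on the initial data and $T_0$), so that Lemma \ref{hold} forces $\phi\ge\frac12>0$ on the whole backward light triangle $T$ with apex $(t_1,x_1)$ and base $B:=T\cap\{t=t_0\}$ (which lies in $\diamondsuit_{T_0}$ after replacing $T_0$ by a slightly larger quantity, as in the proof of Lemma \ref{hold}). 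On $T$ we then have $-\phi_{uv}=\frac14|\phi|^{p-1}\phi=\frac14\phi^{p}\ge 0$, so $\phi_u$ is non-increasing in $v$ and $\phi_v$ non-increasing in $u$ there.

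\emph{The characteristic estimate.} Write $\langle u_1,v_1\rangle=(t_1,x_1)$ and let $\langle u_1,v_0\rangle$ be the foot of $\{u=u_1\}$ on $\{t=t_0\}$, so $v_1-v_0=2(t_1-t_0)$. On $B$ we have, from \eqref{phik}, $|\phi_v|\le K:=K(t_0)$, and from \eqref{phikk}, $\frac1{p+1}|\phi|^{p+1}=O_K(1)$ (using $(1+\frac{\log p}{p}+\frac Kp)^{p+1}=O_K(p)$, exactly as in the expansion preceding \eqref{phito}). Integrating \eqref{phiuv} along $\{v=\mathrm{const}\}$ gives $\phi_v\langle u_1,v\rangle=\phi_v\langle 2t_0-v,v\rangle-\frac14\int_{2t_0-v}^{u_1}\phi^{p}\langle u,v\rangle\,du$ for $v_0\le v\le v_1$, and integrating this over $v\in[v_0,v_1]$ together with $\phi(t_1,x_1)=\phi\langle u_1,v_0\rangle+\int_{v_0}^{v_1}\phi_v\langle u_1,v\rangle\,dv$ yields
$$ \phi(t_1,x_1)\ \le\ 1+\frac{\log p}{p}+\frac{K}{p}+O_K(t_1-t_0)\ -\ \tfrac14\iint_T\phi^{p}\,du\,dv . $$
Estimated crudely, this already gives $\phi(t_1,x_1)\le 1+\frac{\log p}{p}+\frac Kp+O_K(\tau)$; the content of the lemma is that the nonlinear term absorbs the $O_K(t_1-t_0)$. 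To see this one couples the displayed inequality with a \emph{lower} bound on $\iint_T\phi^{p}$: if $\phi(t_1,x_1)=1+\frac{\log p}{p}+\frac\Lambda p$ with $\Lambda$ exceeding the desired $O_K(1)$, then Lemma \ref{hold} keeps $\phi\ge 1+\frac{\log p}{p}$ — hence $\phi^{p}\gtrsim p$ — on a subtriangle of $T$ near the apex of diameter $\gtrsim\min((\Lambda/p)^2,\,t_1-t_0)$; and one uses the two pointwise null-component energy conservation laws for \eqref{nlw} associated to \eqref{phiuv}, $\partial_u(\frac1{p+1}|\phi|^{p+1})+\partial_v(2\phi_u^2)=0$ and $\partial_v(\frac1{p+1}|\phi|^{p+1})+\partial_u(2\phi_v^2)=0$ (compare \eqref{cons}), to control the null fluxes through the edges of $T$ by $O_K(t_1-t_0)$. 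Feeding this back, over short time increments (or as a differential inequality for $\sup_x\phi(t,\cdot)$), I would obtain an ODE-energy bound pinning $\frac1{p+1}|\phi(t_1,x_1)|^{p+1}$ to size $O_K(1)$, hence $|\phi(t_1,x_1)|\le 1+\frac{\log p}{p}+O_K(1/p)$ by the same $(p+1)$th-root computation as in \eqref{phito}.

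\emph{Main obstacle.} The hard part is exactly this last upgrade, from $1+\frac{\log p}{p}+\frac Kp+O_K(\tau)$ to $1+\frac{\log p}{p}+O_K(1/p)$: making the "absorption" of the free part of the evolution by the nonlinearity quantitative and uniform in $p$. This is a rigorous form of the repulsive-barrier mechanism heuristically visible in \eqref{phip}: once $\phi$ passes $1+\frac{\log p}{p}$ the nonlinearity $|\phi|^{p-1}\phi$ has size $\gtrsim p$, so $\phi$ is turned back on a time scale $\sim 1/p$, and the excess produced by the free (D'Alembert) part of the evolution is exactly cancelled by $-\frac14\iint_T\phi^{p}$. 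Turning this into a clean estimate requires carefully tracking the interplay of the pointwise bound with the null-energy fluxes through $T$, and the hypothesis that $\tau$ is small enters precisely to limit how much oscillation in the signs of $\phi_u,\phi_v$ can build up on $T$ — the same oscillation phenomenon identified in the Introduction as the central difficulty of the whole problem.
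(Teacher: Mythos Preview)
Your reductions (time reversal, sign reversal, H\"older to force positivity on the backward triangle, monotonicity of $\phi_u$ in $v$) are correct and match the paper. Your characteristic identity
\[
\phi(t_1,x_1)\ \le\ 1+\tfrac{\log p}{p}+\tfrac{K}{p}+O_K(t_1-t_0)-\tfrac14\iint_T\phi^{p}\,du\,dv
\]
is also correct. The gap is exactly where you flag it: your proposed mechanism for absorbing the $O_K(t_1-t_0)$ term does not close. Using Lemma~\ref{hold} to find the region where $\phi\ge 1+\tfrac{\log p}{p}$ gives diameter only $\sim(\Lambda/p)^2$; even if you upgrade this using the one-sided Lipschitz bound $\phi_u,\phi_v\le K$ (which you have, and which gives diameter $\sim\Lambda/(Kp)$), the nonlinear integral over that region is $\sim\Lambda^2/(K^2p)$, and the inequality becomes $\Lambda+c\Lambda^2/K^2\le K+O_K(p(t_1-t_0))$. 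Since $t_1-t_0$ can be as large as $\tau$, this only yields $\Lambda\ll_K\sqrt{p\tau}$, not $\Lambda=O_K(1)$. The null-energy identities you quote are correct and are used later in the paper (Lemma~\ref{apc}), but here they give only \emph{integrated} control of $\tfrac{1}{p+1}|\phi|^{p+1}$ along the null edges of $T$, bounded by $O_K(t_1-t_0)$; they do not deliver a pointwise bound at the apex, so the claimed ``ODE-energy bound pinning $\tfrac1{p+1}|\phi(t_1,x_1)|^{p+1}$ to size $O_K(1)$'' is unsubstantiated.

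The paper avoids the $O_K(\tau)$ loss by a \emph{first-time} contradiction argument rather than a direct estimate. One fixes a large $C_K$ and assumes the bound $\phi\le 1+\tfrac{\log p}{p}+\tfrac{C_K}{p}$ fails; let $t_1$ be the first failure time, so equality holds at $(t_1,x_1)$ and the bound holds for all $t_0\le t<t_1$. The one-sided Lipschitz bound $\phi_u,\phi_v\le K$ then gives a diamond $\diamondsuit$ of side $r\sim \epsilon_K C_K/p$ at the apex on which $\phi\ge 1+\tfrac{\log p}{p}$, hence $\phi_{uv}\le -cp$. Integrating over $\diamondsuit$ (not over all of $T$) gives
\[
\phi\langle u_1,v_1\rangle-\phi\langle u_1-r,v_1\rangle-\phi\langle u_1,v_1-r\rangle+\phi\langle u_1-r,v_1-r\rangle\ \le\ -cpr^2,
\]
and the first-time hypothesis together with the lower bound on $\diamondsuit$ pins each corner to $1+\tfrac{\log p}{p}+O(C_K/p)$, so the left side is $\ge -O(C_K/p)$. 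This forces $\epsilon_K^2 C_K\ll 1$, a contradiction for $C_K$ large. The point is that the comparison is over a region of size $O_K(1/p)$, not $O(\tau)$; the role of $\tau$ is only to ensure positivity of $\phi$ via Lemma~\ref{hold}, not to ``limit oscillation of signs''.
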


\begin{proof}  We shall use the method of characteristics.
We take $\tau > 0$ to be a small quantity depending on the initial data to be chosen later.
Fix $t_0$ and write $K := K(t_0)$.  Let $\epsilon_K > 0$ be a small quantity depending on $K$ and the initial data to be chosen later, and then let $C_K$ be a large quantity depending on $K,\epsilon_K > 0$ to be chosen later.
By time reversal symmetry \eqref{time-reverse} we may take $t_1 \geq t_0 \ge 0$; by sign reversal symmetry \eqref{sign-reverse} it suffices to establish the upper bound
$$ \phi(t_1,x_1) \leq 1 + \frac{\log p}{p} + \frac{C_K}{p}.$$

Assume this bound fails, then (by continuity and compact support in space) there exists $t_0 \leq t_1  \leq t_0 + \tau$ and $x_1 \in \R$ such that
\begin{equation}\label{phi00}
\phi(t_1,x_1) = 1 + \frac{\log p}{p} + \frac{C_K}{p} 
\end{equation}
and
\begin{equation}\label{phi01}
 \phi(t,x) \leq 1 + \frac{\log p}{p} + \frac{C_K}{p} 
\end{equation}
for all $t_0 \leq t < t_1$ and $x \in \R$.  From \eqref{phikk}, we have $t_1 > t_0$ if $C_K$ is large enough.

From Lemma \ref{hold} we see (if $\tau$ is small enough) that $\phi(t,x)$ is positive in the triangular region 
$$\Delta := \{ (t, x): t_0 \leq t \leq t_1; |x-x_1| \leq |t-t_1| \}$$
(which is contained in $\diamond_{T_0}$). From \eqref{phiuv} we conclude that $\phi_u$ is decreasing in the $v$ direction in $\Delta$, and thus has an upper bound $\phi_u \leq K$ on this region thanks to \eqref{phik}.  Similarly we have $\phi_v \leq K$ on $\Delta$.  Applying the fundamental theorem of calculus and \eqref{phikk} we conclude that
$$\phi(t,x) \leq 1 + \frac{\log p}{p} + \frac{K}{p} + O( K (t_1-t_0) ),$$
for all $(t,x) \in \Delta$, which when compared with \eqref{phi00} shows (if $\epsilon_K$ is small enough and $C_K$ is large enough) that
$$ t_1 \geq t_0 + 2r$$
where $r := \frac{\epsilon_K C_K}{p}$.  

Now we consider the diamond region
$$ \diamondsuit := \{ (t,x): t_1+x_1-r\leq t+x \leq t_1+x_1; \quad t_1-x_1-r \leq t-x \leq t_1-x_1\}.$$
Since $t_1 > 2r$, this diamond is contained in the triangle $\Delta$ (indeed, it is nestled in the upper tip of that triangle).  As before, we have the upper bounds
$$ \phi_u, \phi_v \leq K$$
on this diamond.  From this, \eqref{phi00}, and the fundamental theorem of calculus, we have
\begin{equation}\label{philower}
\phi(t,x) \geq 1 + \frac{\log p}{p}
\end{equation}
on this diamond (if $\epsilon_K$ is small enough).  Applying \eqref{phiuv} we conclude that
$$ \phi_{uv} \leq -(1 + \frac{\log p}{p})^p \leq - cp$$
for some absolute constant $c > 0$.  Integrating this on the diamond we conclude that
$$ \phi(t_1,x_1) - \phi(t_1-r,x_1-r) - \phi(t_1-r,x_1+r) + \phi(t_1-2r,x_0) \leq -cp r^2.$$
But from \eqref{phi01}, \eqref{philower}, the left-hand side is bounded below by $-O( C_K / p )$.  We conclude that
$$ p r^2 \ll \frac{C_K}{p}.$$
But from the definition of $r$, we obtain a contradiction if $C_K$ is large enough depending on $\epsilon_K$, and the claim follows.
\end{proof}

Now we establish a similar stability for the \eqref{phik} component of $K(t_0)$.

\begin{lemma}[Pointwise bound for derivatives]\label{ponder}  There exists a time increment $\tau > 0$ (depending only on the initial data and $T_0$) such that
$$ |\phi_u(t_1,x_1)|, |\phi_v(t_1,x_1)| \ll_{K(t_0)} 1 $$
and
$$ |\phi_{uu}(t_1,x_1)|, |\phi_{vv}(t_1,x_1)| \ll_{K(t_0)} p $$
for any $-T_0 \leq t_0 \leq T_0$ and $(t_1, x_1) \in \diamondsuit_{T_0}$ with $|t_1-t_0| \leq \tau$, and either $t_1 \geq t_0 \geq 0$ or $t_1 \leq t_0 \leq 0$. 
\end{lemma}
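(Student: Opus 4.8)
The plan is to run a continuity argument in time on the slab $\{ t_0 \le t \le t_0+\tau\}$ (the case $t \le t_0 \le 0$ being identical after the time reversal \eqref{time-reverse}), using only the method of characteristics and \eqref{phiuv}. Let $M$ be a large multiple of $K := K(t_0)$, to be fixed at the end, and let $t^\ast \in [t_0,t_0+\tau]$ be the supremum of the times at which the bounds $|\phi_u|,|\phi_v| \le M$ and $|\phi_{uu}|,|\phi_{vv}| \le Mp$ hold throughout $\{t_0 \le t \le t^\ast\} \cap \diamondsuit_{T_0}$; by \eqref{phik}, \eqref{phikkk} these hold at $t_0$ with $M$ replaced by $K$, so $t^\ast > t_0$ by continuity, and it suffices to recover them with $M$ replaced by $M/2$ for every $(t_1,x_1)$ in this slab. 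Fix such a point, write $u_1 := t_1+x_1$, $v_1 := t_1-x_1$, and let $\Delta_1 := \{(t,x): t_0 \le t \le t_1,\ |x-x_1| \le t_1-t\}$ be the backward light triangle of $(t_1,x_1)$ down to time $t_0$ (a triangle of diameter $O(\tau)$ in $\diamondsuit_{T_0}$, whose two null edges run from the base at time $t_0$ up to the apex $(t_1,x_1)$). By Lemma \ref{hold} the oscillation of $\phi$ on $\Delta_1$ is $O(\tau^{1/2})$, so after shrinking $\tau$ one of three situations occurs: (I) $|\phi| \le 2/3$ on $\Delta_1$; (II) $\phi > 1/3$ on $\Delta_1$ and $\phi(t_1,x_1) > 1/2$; or (III) $\phi < -1/3$ on $\Delta_1$ and $\phi(t_1,x_1) < -1/2$. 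By the sign reversal symmetry \eqref{sign-reverse}, (III) reduces to (II); and by the space reflection symmetry \eqref{space-reverse} (which exchanges $u$ and $v$) it suffices, in (II), to bound $\phi_u$ and $\phi_{uu}$.

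In situation (I) we have $|\phi|^{p-1} \le (2/3)^{p-1}$ on $\Delta_1$, so by \eqref{phiuv} both $\phi_{uv}$ and $\phi_{uuv} = -\tfrac{p}{4}|\phi|^{p-1}\phi_u$ are exponentially small in $p$ there (using $|\phi_u| \le M$). Integrating the characteristic identities
$$ \phi_u\langle u_1,v_1\rangle = \phi_u\langle u_1,2t_0-u_1\rangle + \int_{2t_0-u_1}^{v_1}\phi_{uv}\langle u_1,w\rangle\, dw, \qquad \phi_{uu}\langle u_1,v_1\rangle = \phi_{uu}\langle u_1,2t_0-u_1\rangle + \int_{2t_0-u_1}^{v_1}\phi_{uuv}\langle u_1,w\rangle\, dw $$
(the second obtained by differentiating the first in $u_1$, the two $\phi_{uv}$ boundary terms cancelling) down the right null edge from the base at time $t_0$, where \eqref{phik}, \eqref{phikkk} give $|\phi_u| \le K$ and $|\phi_{uu}| \le Kp$, one picks up only these exponentially small errors, so $|\phi_u(t_1,x_1)| \le K + o(1)$ and $|\phi_{uu}(t_1,x_1)| \le Kp + o(1)$, comfortably below $M/2$, $Mp/2$ once $M \ge 3K$ and $p$ is large; the $v$-derivatives are treated identically. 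In situation (II), Lemma \ref{pond} gives $|\phi| \le 1 + \frac{\log p}{p} + O_K(\frac1p)$ on $\Delta_1$, hence $\phi^{p-1} \ll_K p$ and $|1-\phi| \ll \frac{\log p}{p} + O_K(\frac1p)$ there; moreover $\phi_{uv} = -\tfrac14\phi^p < 0$ on $\Delta_1$, so by \eqref{rect} $\phi_u$ is non-increasing in $v$ and $\phi_v$ non-increasing in $u$ on $\Delta_1$. Comparing each point with the time-$t_0$ point on the same null line, this monotonicity alone yields the \emph{upper} bounds $\phi_u \le K$, $\phi_v \le K$ on all of $\Delta_1$, in particular at $(t_1,x_1)$.

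What remains — the crux — is the \emph{lower} bound $\phi_u(t_1,x_1) \gg_K -1$ in situation (II). Along the right null edge, the first characteristic identity reads
$$ \int_{2t_0-u_1}^{v_1}\phi^p\langle u_1,w\rangle\, dw = 4\bigl( \phi_u\langle u_1,2t_0-u_1\rangle - \phi_u(t_1,x_1) \bigr), $$
so a lower bound on $\phi_u(t_1,x_1)$ is exactly an $O_K(1)$ bound on the ``barrier mass'' $\int_{\text{right edge}}\phi^p\, dw$ carried along this null segment. The naive estimate $\phi^p \ll_K p$ only gives $O_K(p\tau)$, but the barrier mass should be $O_K(1)$: the Liouville profile \eqref{lorentz} has $\phi \approx 1$ only on an $O(1/p)$-thin spacelike strip, so $\phi^p \sim p$ is confined to a set of length $\sim 1/p$ along any null curve and integrates to $O(1)$. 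To prove this rigorously I would argue by contradiction in the spirit of the proof of Lemma \ref{pond}: assuming $-\phi_u(t_1,x_1)$ exceeds a suitable constant depending on $K$, one combines (a) the identity above together with its analogues over sub-edges, (b) the monotonicity of $\phi_u$ in $v$ and of $\phi_v$ in $u$, (c) the pointwise \emph{almost}-conservation law
$$ \partial_v\Bigl(\tfrac12\phi_u^2 - \tfrac1p\phi_{uu}\Bigr) = \tfrac14|\phi|^{p-1}\phi_u(1-\phi), $$
the analogue for \eqref{phiuv} of the Liouville identity \eqref{cons} (whose right-hand side is small precisely because $|1-\phi| = O(\log p/p)$ where $|\phi|^{p-1}$ is large), and (d) Lemma \ref{pond} itself — in order to force $\phi$ to exceed $1 + \frac{\log p}{p}$ on a spacetime diamond whose sidelength is too large to be compatible with Lemma \ref{pond}. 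Once the lower bounds $|\phi_u(t_1,x_1)|,|\phi_v(t_1,x_1)| \ll_K 1$ are in hand, the barrier-mass identity gives $\int_{\text{right edge}}|\phi|^{p-1}\, dw \le 3\int_{\text{right edge}}\phi^p\, dw \ll_K 1$ (using $\phi > 1/3$), so the second characteristic identity together with $\phi_{uuv} = -\tfrac{p}{4}|\phi|^{p-1}\phi_u$ yields
$$ |\phi_{uu}(t_1,x_1)| \le Kp + \tfrac{p}{4}\Bigl(\sup_{\text{right edge}}|\phi_u|\Bigr)\int_{\text{right edge}}|\phi|^{p-1}\, dw \ll_K p, $$
and symmetrically for $\phi_{vv}$. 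Taking $M$ large enough depending on $K$ and all implied constants recovers the $M/2$, $Mp/2$ bounds, closing the continuity argument; letting $t^\ast \to t_0+\tau$ finishes the proof.

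I expect the genuine obstacle to be exactly step (d): the $O_K(1)$ bound on the barrier mass $\int\phi^p$ along null segments, i.e.\ ruling out that the solution lingers near the barrier. This is the point at which one must confront the oscillation of the null derivatives near $\phi = \pm 1$ flagged in the proof-strategy discussion — the very reason that weak convergence of $\phi^{(p)}_u$ fails to control $|\phi^{(p)}_u|$ — and it is why the monotonicity and the almost-conservation law must be used in tandem: monotonicity alone gives only the one-sided bounds, while the almost-conservation law, integrated over the $p$-independent interval $[t_0,t_0+\tau]$, accumulates an error of order $\sup|\phi_u|\cdot\log p$, which by itself would be far too lossy.
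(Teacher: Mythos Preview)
Your overall architecture matches the paper's: a continuity (bootstrap) argument, reduction by the symmetries \eqref{time-reverse}, \eqref{space-reverse}, \eqref{sign-reverse}, the easy case $|\phi|\le 2/3$ handled by integrating \eqref{phiuv}, and the upper bound $\phi_u\le K$ from monotonicity. The gap is exactly where you flag it, the lower bound on $\phi_u$, but your assessment of the almost-conservation law is where you go wrong, and your proposed contradiction route (forcing $\phi$ above $1+\tfrac{\log p}{p}$ on a too-large diamond, \`a la Lemma \ref{pond}) is not how the paper closes the argument.

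The paper does \emph{not} try to bound the barrier mass $\int\phi^p\,dw$ directly; as you implicitly note, that quantity equals $4(\phi_u|_{\text{base}}-\phi_u|_{\text{apex}})$, so bounding it is equivalent to the lower bound you seek and is circular. Instead the paper passes to the rescaled variable $\psi$ via $\phi=p^{1/(p-1)}(1+\tfrac1p\psi)$ and uses the almost-conservation law in the form \eqref{consf},
\[
\partial_v\Bigl(\tfrac12\psi_u^2-\psi_{uu}\Bigr)=-\tfrac{1}{4p}\,F_p(\psi)\,\psi_v,\qquad F_p(s):=s\,(1+\tfrac{s}{p})^{p-1}.
\]
The crucial point you are missing is that $F_p(\psi)=O_K(1)$ \emph{pointwise}: for $\psi\le O_K(1)$ (which is exactly the content of Lemma \ref{pond}) one has $s(1+s/p)^{p-1}=O_K(1)$ uniformly over $-p\le s\le O_K(1)$ (check the maximum of $|s|(1+s/p)^{p-1}$ on $s\le 0$ occurs at $s=-1$ and equals $e^{-1}+o(1)$). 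Your version of the identity carries the factor $(1-\phi)$ instead of $(p^{1/(p-1)}-\phi)$; the discrepancy is precisely the $\tfrac{\log p}{p}$ shift, and that is the source of your spurious $\log p$ loss. With the bootstrap $|\psi_v|\le C_K^{1/2}$, integrating \eqref{consf} over the (rescaled) $v$-interval of length $O(\tau p)$ yields
\[
\Bigl|\tfrac12\psi_u^2-\psi_{uu}\Bigr|\le A,\qquad A\sim_K C_K^{1/2},
\]
a genuine \emph{square-root} gain over the bootstrap bound $C_K$. The lower bound then follows by a backward-in-$u$ ODE argument, not a diamond argument: if $\psi_u\le -\sqrt{2A}$ at some point, the inequality above forces $\psi_{uu}\ge\tfrac12\psi_u^2-A>0$ there, so moving \emph{backward along the $u$-characteristic} (fixed $v$) $\psi_u$ only decreases further, and this persists by continuity down to $t=0$, contradicting $|\psi_u(0,\cdot)|=O_K(1)$ once $C_K$ is large. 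Thus $|\psi_u|\ll_K A^{1/2}\sim_K C_K^{1/4}$ and then $|\psi_{uu}|\ll_K A\sim_K C_K^{1/2}$, which together contradict $|\psi_u|^2+|\psi_{uu}|=C_K$. There is no separate step for $\phi_{uu}$: it falls out of the same inequality, so your final barrier-mass bound for $\phi_{uu}$ is unnecessary.
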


\begin{proof}  This will be a more advanced application of the method of characteristics.  We again let $\tau > 0$ be a sufficiently small quantity (depending on the initial data) to be chosen later.  Fix $t_0$ and write $K=K(t_0)$, and let $C_K > 0$ be a large quantity depending on $K$ and the initial data to be chosen later.

It will suffice to show that
$$ |\phi_u(t_1,x_1)|^2 + \frac{1}{p} |\phi_{uu}(t_1,x_1)|, |\phi_v(t_1,x_1)|^2 + \frac{1}{p} |\phi_{vv}(t_1,x_1)| \leq C_K$$
whenever $|t_1-t_0| \leq \tau$ and $x_1 \in \R$.

Suppose for contradiction that this claim failed. As before (using the symmetries \eqref{time-reverse}, \eqref{space-reverse}) we may assume that $0 \leq t_0 \leq t_1 \leq t_0+\tau$ and $x_1$ are such that
\begin{equation}\label{phiuc-0}
|\phi_u(t_1,x_1)|^2 + \frac{1}{p} |\phi_{uu}(t_1,x_1)| = C_K
\end{equation}
(say), and that
\begin{equation}\label{phiuc}
 |\phi_u(t,x)|^2 + \frac{1}{p} |\phi_{uu}(t,x)|, |\phi_v(t,x)|^2 + \frac{1}{p} |\phi_{vv}(t,x)| \leq C_K
\end{equation}
for all $t_0 \leq t \leq t_1$ and $x$ with $(t,x) \in \diamondsuit_{T_0}$.

We first dispose of an easy case when $\phi(t_1,x_1)$ is small, say $|\phi(t_1,x_1)| \leq 1/2$.  Then by Lemma \ref{hold} we conclude (if $\tau$ is small enough) that $|\phi| \leq 1$ on the triangular region $\{ (t,x): t_0 \leq t \leq t_1: |x_1-x| \leq |t_1-t| \}$, and the claim then easily follows from \eqref{phiuv}, the fundamental theorem of calculus, and \eqref{phik}.  Thus we may assume $|\phi(t_1,x_1)| > 1/2$; replacing $\phi$ with $-\phi$ if necessary we may assume $\phi(t_1,x_1) > 1/2$.

By Lemma \ref{hold}, we see that $\phi$ is positive whenever $|t-t_1|, |x-x_1| \leq 100 \tau$ (say), if $\tau$ is small enough.

It will be convenient to make the change of variables
$$ \phi(t,x) = p^{1/(p-1)} (1 + \frac{1}{p} \psi( p(t-t_0), p(x-x_1) ))$$
then from \eqref{phiuv}, $\psi$ solves the equation
\begin{equation}\label{psiuv}
\psi_{uv} = - \frac{1}{4} (1 + \frac{1}{p} \psi)^p
\end{equation}
on the region $|t|, |x| \leq 100 \tau p$.  From \eqref{phiuc-0}, \eqref{phiuc} we have
\begin{equation}\label{psipsi0}
 |\psi_u( p(t_1-t_0), 0)|^2 + |\psi_{uu}( p(t_1-t_0), 0)| \sim C_K
\end{equation}
and
\begin{equation}\label{psipsi}
 |\psi_u(t,x)|^2, |\psi_v(t,x)|^2, |\psi_{uu}(t,x)|, |\psi_{vv}(t,x)| \ll C_K.
\end{equation}
for $0 \leq t \leq p(t_1-t_0)$ and $|x| \leq 100 \tau p$.  Meanwhile, while from Lemma \ref{pond} (and shrinking $\tau$ as necessary) we have the upper bound
\begin{equation}\label{psiup}
 \psi(t,x) \ll_K 1
\end{equation}
for $|t|, |x| \leq 100 \tau p$.  Note though that we do not expect $\psi$ to enjoy a comparable lower bound, but since $\phi$ is positive in the region of interest, we have 
\begin{equation}\label{psip}
\psi(t,x) \geq -p
\end{equation}  
for $|t|, |x| \leq 100 \tau p$.
Finally, from \eqref{phik}, \eqref{phikk} we have
\begin{equation}\label{psi0}
 |\psi_u(0,x)|, |\psi_v(0,x)|, |\psi_{uu}(0,x)|, |\psi_{vv}(0,x)| \ll_K 1
 \end{equation}
whenever $|x| \leq 100 \tau p$.

Motivated by the pointwise conservation laws \eqref{cons} of the equation \eqref{psiexp}, which \eqref{psiuv} formally converges to, we consider the quantity
$$ \partial_v ( \frac{1}{2} \psi_u^2 - \psi_{uu} ).$$
Using \eqref{psiuv}, we compute
\begin{equation}\label{consf}
 \partial_v ( \frac{1}{2} \psi_u^2 - \psi_{uu} ) = -\frac{1}{4p} F_p(\psi) \psi_v
\end{equation}
where $F_p(s) := s (1 + \frac{1}{p} s)^{p-1}$.  From \eqref{psiup} we see that $F_p(\psi) = O_K(1)$, and thus by \eqref{psipsi} we have
$$ \partial_v ( \frac{1}{2} \psi_u^2 - \psi_{uu} ) = O_K(C_K^{1/2} / p).$$
From this, \eqref{psi0}, \eqref{psipsi} and the fundamental theorem of calculus we see that
\begin{equation}\label{psia}
|\frac{1}{2} \psi_u^2 - \psi_{uu}|(t,x) \leq A
\end{equation}
for all $0 \leq t \leq p(t_1-t_0)$ and $|x| \leq 50 \tau p$, and some $A \sim_K C_K^{1/2}$.

From \eqref{psiuv} we know that $\psi_u$ is decreasing in the $v$ direction, so from \eqref{psi0} we also have the upper bound
$$ \psi_u(t,x) \leq O_K(1)$$
for all $0 \leq t \leq p(t_1-t_0)$ and $|x| \leq 50 \tau p$.  To get a lower bound, suppose that $\psi_u(t,x) \leq -A^{1/2}$ for some 
$0 \leq t \leq p(t_1-t_0)$ and $|x| \leq 40 \tau p$.  Then from \eqref{psia} we have $\psi_{uu}(t,x) \geq 0$.  If we move backwards in the $u$ direction, we thus see that $\psi_u$ decreases; continuing this (by the usual continuity argument) until we hit the initial surface $t=0$ and applying \eqref{psi0} we conclude that $\psi_u(t,x) \geq -O_K(1)$, a contradiction if $C_K$ is large enough.  We thus conclude that $\psi_u \geq -A^{1/2}$ for $0 \leq t \leq p(t_1-t_0)$ and $|x| \leq 40 \tau p$.  Combining this with the upper bound and with \eqref{psia} we contradict \eqref{psipsi0} if $C_K$ is large enough, and the claim follows.
\end{proof}

Combining Lemmas \ref{pond}, \ref{ponder} and the definition of $K(t)$ we conclude that
$$ K(t) \ll_{K_{t_0}} 1$$
whenever $|t-t_0| \leq \tau$ and $T_0 \geq t \geq t_0 \geq 0$ or $-T_0 \leq t \leq t_0 \leq 0$.  Since $K(0) \ll 1$, we thus conclude on iteration that
$$ K(t) \ll 1$$
for all $t \in [-T_0,T_0]$.  Thus we have
\begin{align}
 |\phi(t,x)| &\leq 1 + \frac{\log p}{p} + O\left( \frac{1}{p} \right)\label{phibound-0} \\
 |\phi_u(t,x)|, |\phi_v(t,x)| &\ll 1\label{phibound-1} \\
 |\phi_{uu}(t,x)|, |\phi_{vv}(t,x)| &\ll p\label{phibound-2}
\end{align}
for all $(t,x) \in \diamondsuit_{T_0}$.

Now we revisit the conservation laws \eqref{cons} which were implicitly touched upon in the proof of Lemma \ref{ponder}.

\begin{lemma}[Approximate pointwise conservation law]\label{apc}  There exists $\tau > 0$ (depending on $T_0$ and the initial data) such that the following claim holds: whenever $\langle u_0, v_0 \rangle \in \diamondsuit_{T_0}$ is such that $\phi\langle u_0, v_0 \rangle \geq -1/2$, then
\begin{equation}\label{phiu-cons}
(\frac{1}{2} \phi_u^2 - \frac{1}{p} \phi_{uu})\langle u_0, v_0+r \rangle = (\frac{1}{2} \phi_u^2 - \frac{1}{p} \phi_{uu})\langle u_0, v_0 \rangle + O\left(\frac{\log p}{p}\right)
\end{equation}
and
$$ (\frac{1}{2} \phi_v^2 - \frac{1}{p} \phi_{vv})\langle u_0+r, v_0 \rangle = (\frac{1}{2} \phi_v^2 - \frac{1}{p} \phi_{vv})\langle u_0, v_0 \rangle + O\left(\frac{\log p}{p}\right)$$
for all $-\tau \leq r \leq \tau$.

If instead $\phi\langle u_0, v_0 \rangle \leq +1/2$, then we have
$$
(\frac{1}{2} \phi_u^2 + \frac{1}{p} \phi_{uu})\langle u_0, v_0+r \rangle = (\frac{1}{2} \phi_u^2 + \frac{1}{p} \phi_{uu})\langle u_0,v_0\rangle + O\left(\frac{\log p}{p}\right)
$$
and
$$ (\frac{1}{2} \phi_v^2 + \frac{1}{p} \phi_{vv})\langle u_0+r, v_0 \rangle = (\frac{1}{2} \phi_v^2 + \frac{1}{p} \phi_{vv})\langle u_0,v_0\rangle + O\left(\frac{\log p}{p}\right)$$
for all $-\tau \leq r \leq \tau$.
\end{lemma}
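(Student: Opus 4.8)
The plan is to prove the first identity, \eqref{phiu-cons}, for the combination $\frac{1}{2}\phi_u^2 - \frac{1}{p}\phi_{uu}$ under the hypothesis $\phi\langle u_0,v_0\rangle \geq -1/2$; the remaining three identities of the lemma will then follow from the reflection symmetries of \eqref{nlw}. First I would propagate the hypothesis along the null segment $u = u_0$: since $|\phi_v| \ll 1$ on $\diamondsuit_{T_0}$ by \eqref{phibound-1}, integrating in $v$ shows that if $\tau$ is chosen small enough (depending only on the initial data and $T_0$), then $\phi\langle u_0, v\rangle \geq -3/4$ whenever $|v-v_0| \leq \tau$, so that $\phi$ stays uniformly bounded away from $-1$ along the relevant segment. (As usual we may tacitly assume \eqref{phibound-0}--\eqref{phibound-2} have been established on a slightly enlarged diamond such as $\diamondsuit_{T_0+1}$, which is harmless since all implied constants may depend on $T_0$.)

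Next I would compute the near-conservation of the quantity in question. Starting from \eqref{phiuv} and using $\frac{d}{ds}(|s|^{p-1}s) = p|s|^{p-1}$, a direct computation gives
\begin{equation}\label{apcdiff}
\partial_v\left( \frac{1}{2}\phi_u^2 - \frac{1}{p}\phi_{uu} \right) = \frac{1}{4}\,\phi_u\,|\phi|^{p-1}(1-\phi),
\end{equation}
which is the finite-$p$ counterpart of the pointwise conservation law \eqref{cons} (compare also \eqref{consf} in the proof of Lemma \ref{ponder}). Integrating \eqref{apcdiff} in $v$ and using $|\phi_u| \ll 1$ from \eqref{phibound-1}, the identity \eqref{phiu-cons} reduces, uniformly in $|r|\le\tau$, to the key estimate
\begin{equation}\label{apckey}
\int_{|v-v_0|\leq\tau} |\phi|^{p-1}|1-\phi|\ dv \ll \frac{\log p}{p},
\end{equation}
the integral being taken along $u=u_0$.

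To prove \eqref{apckey} I would split the segment according to the size of $\phi$ into the parts $\{|\phi| \leq 1 - \frac{2\log p}{p}\}$, $\{1 - \frac{2\log p}{p} < \phi \leq 1\}$, and $\{\phi > 1\}$ (which cover the segment, since $-3/4 \leq \phi \leq 1 + \frac{\log p}{p} + O(1/p)$ there by \eqref{phibound-0} and the first paragraph). On the first part $|\phi|^{p-1} \leq (1 - \frac{2\log p}{p})^{p-1} \ll p^{-2}$ while $|1-\phi| \ll 1$, contributing $\ll p^{-2} \ll \frac{\log p}{p}$. On the second part the trivial bound $|\phi|^{p-1} \leq 1$ together with $|1-\phi| < \frac{2\log p}{p}$ contributes $\ll \frac{\log p}{p}$. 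The only delicate region is $\{\phi > 1\}$, where $|\phi|^{p-1}$ can have size as large as $\approx p$; here a crude bound (the size of $|\phi|^{p-1}$ times the $v$-measure of $\{\phi \approx 1\}$, which is of order $\frac{\log p}{p}$) would only give $O(\frac{\log^2 p}{p})$, so instead I would use the equation. Integrating \eqref{phiuv} in $v$ and using \eqref{phibound-1},
\begin{equation*}
\int_{|v-v_0|\leq\tau} |\phi|^{p-1}\phi\ dv = -4\left( \phi_u\langle u_0, v_0+\tau\rangle - \phi_u\langle u_0, v_0-\tau\rangle \right) = O(1).
\end{equation*}
Since $|\phi|^{p-1}\phi = \phi_+^p - \phi_-^p$, where $\phi_\pm := \max(\pm\phi,0)$, and $0 \leq \phi_- \leq 3/4$ along the segment, we deduce $\int_{|v-v_0|\leq\tau}\phi_+^p\ dv \ll 1$. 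On $\{\phi > 1\}$ we have $|1-\phi| = \phi - 1 \leq \frac{\log p}{p} + O(1/p)$ by \eqref{phibound-0}, together with $|\phi|^{p-1} = \phi^{p-1} < \phi^p = \phi_+^p$, so this part of \eqref{apckey} is $\ll \frac{\log p}{p}\int_{|v-v_0|\leq\tau}\phi_+^p\ dv \ll \frac{\log p}{p}$. Summing the three contributions establishes \eqref{apckey}, and hence \eqref{phiu-cons}.

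Finally I would recover the three remaining identities from the symmetries. The identity for $\frac{1}{2}\phi_v^2 - \frac{1}{p}\phi_{vv}$ in the $u$ direction follows from \eqref{phiu-cons} by the space reflection symmetry \eqref{space-reverse}, which interchanges $u$ and $v$ while preserving both \eqref{nlw} and the hypothesis on the value of $\phi$. The two identities in the case $\phi\langle u_0,v_0\rangle \leq +1/2$ follow by applying the identities already proven to $-\phi$ via the sign reversal symmetry \eqref{sign-reverse}: $-\phi$ again solves \eqref{nlw} and obeys the same a priori bounds \eqref{phibound-0}--\eqref{phibound-2}, while $\frac{1}{2}(-\phi)_u^2 - \frac{1}{p}(-\phi)_{uu} = \frac{1}{2}\phi_u^2 + \frac{1}{p}\phi_{uu}$ (and similarly in the $v$ variable). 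I expect the main obstacle to be the sharpness of \eqref{apckey}: obtaining the stated $O(\frac{\log p}{p})$ rather than a lossy $O(\frac{\log^2 p}{p})$ forces one to treat $\{\phi \leq 1\}$ and $\{\phi > 1\}$ separately and, crucially, to replace an $L^\infty$ bound on the over-$1$ region by the equation-derived estimate $\int \phi_+^p\ dv \ll 1$.
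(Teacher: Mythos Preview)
Your argument is correct. The core mechanism is the same as the paper's: compute the $v$-derivative of $\frac{1}{2}\phi_u^2 - \frac{1}{p}\phi_{uu}$, bound $|\phi_u|$ by \eqref{phibound-1}, and then control the integral of the remaining nonlinear factor by using the equation \eqref{phiuv} to show $\int |\phi|^{p}\,dv \ll 1$ along the null segment. Where you differ is in packaging. The paper first disposes of the region $|\phi|\le 1/2$ separately, then rescales via $\phi = p^{1/(p-1)}(1+\tfrac{1}{p}\psi)$ and estimates $\int |F_p(\psi)|\,dv$ by comparing $F_p(x)=x(1+x/p)^{p-1}$ with $(\log p)\,e^x$ on $x\ge -100\log p$ and using $\int e^\psi\,dv\ll 1$. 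You instead work directly with $\phi$, absorb the small-$\phi$ case into your first region $\{|\phi|\le 1-\tfrac{2\log p}{p}\}$, and split the large-$\phi$ contribution at the threshold $\phi=1$, handling $\{\phi>1\}$ by the equation-derived bound $\int \phi_+^p\,dv\ll 1$ and the middle strip by the trivial pointwise bound $|\phi|^{p-1}|1-\phi|\le \tfrac{2\log p}{p}$. Your route is somewhat more elementary in that it avoids the rescaling and the two-case split, at the cost of a three-way decomposition; the paper's $\psi$-formulation makes the connection to the Liouville conservation law \eqref{cons} more transparent. Both yield the sharp $O(\tfrac{\log p}{p})$ error.
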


\begin{proof}  Let $\tau > 0$ be sufficiently small to be chosen later.  By sign reversal symmetry \eqref{sign-reverse} we may assume that $\phi\langle u_0,v_0\rangle \geq -1/2$.   By spatial reflection symmetry \eqref{space-reverse} it suffices to prove \eqref{phiu-cons}.  

Suppose first that $-1/2 \leq \phi\langle u_0,v_0\rangle \leq 1/2$, then by Lemma \ref{hold} we have $|\phi\langle u,v\rangle| \leq 0.9$ (say) whenever $|u-u_0|, |v-v_0| \leq 100 \tau$.  Applying \eqref{phiuv} and the fundamental theorem of calculus, we see that
$$ \phi_u\langle u_0+r,v_0\rangle = \phi_u\langle u_0,v_0\rangle + O\left( \frac{\log p}{p} \right)$$
for all $-\tau \leq r \leq \tau$; similarly, if one differentiates \eqref{phiuv} in the $u$ direction and applies the bound $|\phi\langle u,v\rangle| \leq 0.9$ as well as \eqref{phibound-1}, we obtain
$$ \phi_{uu}\langle u_0+r,v_0\rangle = \phi_{uu}\langle u_0,v_0\rangle + O(\log p)$$
and the claim \eqref{phiu-cons} follows.

Henceforth we assume $\phi\langle u_0,v_0\rangle > 1/2$.
By Lemma \ref{hold} (or \eqref{phibound-1}) we see that $\phi\langle u,v\rangle$ is positive when $|u-u_0|, |v-v_0| \leq 100 \tau$, so by making the ansatz
$$ \phi\langle u,v \rangle = p^{1/(p-1)} (1 + \frac{1}{p} \psi\langle p(u-u_0), p(v-v_0)\rangle)$$
as before, we see that $\psi$ obeys \eqref{psiuv} for $|u|, |v| \leq 100 \tau p$.  Also, from \eqref{phibound-0}, \eqref{phibound-1}, \eqref{phibound-2} (and the positivity of $\phi$) we see that
\begin{equation}\label{george}
|\psi_u\langle u,v\rangle|, |\psi_v\langle u,v\rangle|, |\psi_{uu}\langle u,v\rangle|, |\psi_{vv}\langle u,v\rangle| \ll 1
\end{equation}
and
\begin{equation}\label{george-2}
-p \leq  \psi\langle u,v\rangle \leq O(1)
\end{equation}
for all $|u|, |v| \leq 100 \tau p$.  Our objective is to show that
$$ (\frac{1}{2} \psi_u^2 - \psi_{uu})\langle 0,r\rangle = (\frac{1}{2} \psi_u^2 - \psi_{uu})\langle 0, 0 \rangle + O\left(\frac{\log p}{p}\right)$$
for all $-\tau p \leq r \leq \tau p$.  By \eqref{consf} and the fundamental theorem of calculus it suffices to show that
\begin{equation}\label{tawp}
\int_{-\tau p}^{\tau p} |F_p(\psi\langle 0,r\rangle)| |\psi_v\langle 0,r\rangle|\ dr \ll_T \frac{\log p}{p}.
\end{equation}
Applying \eqref{george} we can discard the $|\psi_v\langle 0,r\rangle|$ factor.  Meanwhile, from \eqref{psiexp}, \eqref{george}, and the fundamental theorem of calculus we have
$$ \int_{-\tau p}^{\tau p} e^{\psi\langle 0,r\rangle}\ dr \ll_T 1.$$
Observe that $F_p(x) \ll_T (\log p) e^x$ whenever $-100 \log p \leq x \leq O_T(1)$, and that $F_p(x) \ll p^{-50}$ when $x \leq -100 \log p$, and the claim \eqref{tawp} follows.
\end{proof}

We now use this law to show a more precise bound on $\phi_u$ and $\phi_v$ than is provided by \eqref{phibound-1}.  We first handle the case when $\phi$ has large derivative.

\begin{lemma}[Piecewise convergence, nondegenerate case]\label{piece}  Let $\eps > 0$, and let $I\subset [-T_0,T_0]$ be an interval such that $|\phi^{(\lin)}_u\langle u, v \rangle| \geq \eps$ for all $u \in I$ (note that $v$ is irrelevant here).  Then for each $v \in [-T_0,T_0]$, we have
\begin{equation}\label{phiu-cons2}
|\phi_u\langle u, v \rangle|^2 = |\phi^{(\lin)}_u\langle u, v \rangle|^2 + O\left( \frac{\log p}{p} \right)
\end{equation}
and
\begin{equation}\label{phiu-cons3}
\phi_{uu}\langle u, v \rangle = O( \log p )
\end{equation}
for all $u$ in $I$, excluding at most $O(1)$ intervals in $I$ of length $O_{\eps}( \frac{\log p}{p} )$.

Similarly with the roles of $u$ and $v$ reversed.
\end{lemma}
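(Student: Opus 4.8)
The plan is to trace the null characteristic through $\langle u,v\rangle$ back to the initial surface, propagating the approximate conservation law of Lemma~\ref{apc}, and to analyse separately the ``free'' portions of this characteristic where $\phi$ stays away from $\pm1$ and the ``reflection'' portions where $\phi$ is close to $+1$ or $-1$. By the symmetries \eqref{time-reverse}, \eqref{space-reverse}, \eqref{sign-reverse} it suffices to treat $\phi_u,\phi_{uu}$; fix $v\in[-T_0,T_0]$, and for $u\in I$ let $\ell_u=\{\langle u,v'\rangle: -u\le v'\le v\}$ be the segment of $\{u=\text{const}\}$ from the initial surface to $\langle u,v\rangle$ (using \eqref{time-reverse} if $-u>v$); it has length $O(1)$. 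On the initial surface, \eqref{nlw} and hypothesis (b) give $\phi_u\langle u,-u\rangle=\phi^{(\lin)}_u(u)$ exactly and $\phi_{uu}\langle u,-u\rangle=O(1)$ uniformly in $p$ (since $|\phi_0|\le1-c$ on the relevant compact set makes $|\phi_0|^{p-1}\phi_0$ exponentially small), so the conserved combination $\tfrac12\phi_u^2\mp\tfrac1p\phi_{uu}$ of Lemma~\ref{apc} equals $\tfrac12|\phi^{(\lin)}_u(u)|^2+O(1/p)$ there, a constant that is $\ge\tfrac12\eps^2$ on $I$.

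Next I would cut $\ell_u$ into $O(1)$ maximal subintervals (finiteness using the non-degeneracy hypothesis (c) together with the count of reflections below) on which $\phi$ is either confined to $[-1/2,1/2]$, or to a neighbourhood of $+1$, or to a neighbourhood of $-1$. On a subinterval of the first kind $|\phi|^{p-1}\phi=O(2^{-p})$, so by \eqref{phiuv} and its $u$-derivative $\phi_u$, $\phi_{uu}$ and $\tfrac12\phi_u^2\mp\tfrac1p\phi_{uu}$ change only by exponentially small amounts; in particular $\phi_{uu}=O(\log p)$ throughout such a subinterval. On a subinterval near $+1$ I would pass to the rescaled $\psi$ of Lemma~\ref{apc}, which solves \eqref{psiuv}, satisfies $\psi\ll1$ as in \eqref{george-2}, and (applying Lemma~\ref{apc} $O(1)$ times) approximately conserves $\tfrac12\psi_u^2-\psi_{uu}$ in $v$ and $\tfrac12\psi_v^2-\psi_{vv}$ in $u$ to within $O(\log p/p)$; matching the first quantity to the adjacent free subintervals pins it throughout the reflection to $\tfrac12|\phi^{(\lin)}_u(u)|^2+O(\log p/p)\ge\tfrac14\eps^2$, i.e.\ to the conserved quantity of Liouville's equation \eqref{psiexp}. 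Subintervals near $-1$ are handled by \eqref{sign-reverse}.

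The crux is to turn this into the assertion that $\psi$ is, up to $O(\log p/p)$, a Lorentz-boosted Liouville profile \eqref{lorentz}. Along each $v$-characteristic $\psi_u$ is non-increasing (as $\psi_{uv}\le0$) and $\tfrac12\psi_u^2-\psi_{uu}$ is essentially a constant $\ge\tfrac14\eps^2$; together with the one-sided bound $\psi\ll1$ (which rules out any further hump in $\psi$), a phase-plane analysis of the two pointwise conservation laws --- in the spirit of the classical derivation of \eqref{pde-sol} --- forces $\psi_u$ to descend monotonically from $+|\phi^{(\lin)}_u(u)|+O(\log p/p)$ to $-|\phi^{(\lin)}_u(u)|+O(\log p/p)$ and $\psi_{uu}$ to be a single negative $\operatorname{sech}^2$-type bump. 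The relevant transition scale is $e^{-2a|w|}$, with $a$ and $w$ the parameter and argument appearing in \eqref{lorentz} and $a|w_u|\sim|\phi^{(\lin)}_u(u)|\ge\eps$, so even when the boost approaches a null direction (i.e.\ $|\phi^{(\lin)}_v|\gg|\phi^{(\lin)}_u|$) the estimate does not degenerate, and at $u$-distance $\gg_\eps\log p/p$ from the curve $\{\psi_u=0\}$ one gets $|\psi_u|^2=|\phi^{(\lin)}_u(u)|^2+O(\log p/p)$ and $\psi_{uu}=O(\log p/p)$, that is $|\phi_u|^2=|\phi^{(\lin)}_u|^2+O(\log p/p)$ and $\phi_{uu}=O(\log p)$. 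Walking back along $\ell_u$, the sign in $\tfrac12\phi_u^2\mp\tfrac1p\phi_{uu}$ flips exactly where $\phi$ meets $\pm1/2$, where $\phi_{uu}=O(\log p)$ by the free-subinterval analysis, so each switch costs only $O(\log p/p)$; summing over the $O(1)$ subintervals yields \eqref{phiu-cons2}, \eqref{phiu-cons3} at $\langle u,v\rangle$ for all $u\in I$ outside the $\gg_\eps\log p/p$-neighbourhoods of the reflection centres.

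Finally, there are only $O(1)$ such excluded intervals: along any characteristic of length $O(1)$ the $+1$- and $-1$-reflections alternate --- for $p$ large, only a $+1$-reflection can change $\sgn\phi_u$ from $+$ to $-$, and only a $-1$-reflection can change it back --- and between consecutive reflections $\phi$ must cross essentially all of $[-1,1]$, which by \eqref{phibound-1} takes characteristic-length $\gg1$; hence $O(1)$ reflections occur, the line $\{v'=v\}$ meets $O(1)$ reflection curves, and each yields one excluded interval of length $O_\eps(\log p/p)$. Interchanging $u$ and $v$ gives the last assertion. The main obstacle is the third step: this is precisely where the ``increasing oscillation in the sign of $\phi_u$'' warned about in the introduction must be defeated, and the phase-plane analysis has to be carried out from the conservation laws and the one-sided bound directly --- no perturbative comparison with the explicit profile being available --- while keeping the $O(\log p/p)$ errors from accumulating badly over the $O(1)$ subintervals.
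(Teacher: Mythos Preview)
Your overall strategy is in the right spirit, but the step you yourself flag as the crux has a genuine gap, and the paper's route is quite different and much simpler. Along your characteristic $\ell_u$ (fixed $u$, varying $v'$), Lemma~\ref{apc} gives you that $\tfrac12\phi_u^2-\tfrac1p\phi_{uu}$ is approximately equal to $\tfrac12|\phi^{(\lin)}_u(u)|^2$ at every point --- but this is a single scalar constraint on the \emph{pair} $(\phi_u,\phi_{uu})$, not an evolution equation in $v'$ for either one separately. The actual $v'$-evolution of $\phi_u$ is $\phi_{uv}=-\tfrac14|\phi|^{p-1}\phi$, which depends on $\phi$ and hence on $\phi_v$; and your second conservation law constrains $(\phi_v,\phi_{vv})$, a different pair. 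So the ``phase-plane analysis of the two pointwise conservation laws'' does not close: nothing in your sketch forces $\psi_u$ to arrive at $\pm|\phi^{(\lin)}_u(u)|+O(\log p/p)$ at the exit of a reflection subinterval rather than at some intermediate value compatible with the constraint (with a correspondingly large $\psi_{uu}$). Comparison with the explicit Liouville profile would require exactly the kind of stability theory for \eqref{psitx} that the introduction warns is unavailable.

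The paper avoids all of this by reading the \emph{same} conservation law in the orthogonal direction. For fixed $v$, one application of Lemma~\ref{apc} back to a slightly earlier time (where $\phi_{uu}$ is already controlled, by induction on time in steps of some fixed $\tau$) gives $\tfrac1p\phi_{uu}\langle u,v\rangle=\tfrac12\phi_u^2\langle u,v\rangle-\tfrac12 g(u)^2+O(\log p/p)$ with $g(u)=\phi^{(\lin)}_u(u)$. Read as a statement about $f(u):=\phi_u\langle u,v\rangle$, this is a genuine Riccati ODE in $u$ with large parameter, $f'=p(f^2-g^2+O(\log p/p))$. Elementary Gronwall/continuity arguments on this ODE then show directly that $f^2=g^2+O(\log p/p)$ except on $O(1)$ intervals in $u$ of length $O_\eps(\log p/p)$ where $f$ transits rapidly between the two branches $\pm g$; substituting back gives \eqref{phiu-cons3}. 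There is no need to resolve any Liouville profile, to decompose characteristics into free and reflection portions, or to count reflections: the induction is simply on time slabs of width $\tau$ (chosen via Lemma~\ref{hold} so that the sign in Lemma~\ref{apc} is fixed on each slab), and the number of exceptional $u$-intervals stays $O(1)$ because each inductive step adds only boundedly many.
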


\begin{proof} Let $\tau > 0$ be a small number (depending on the initial data and $T_0$) to be chosen later.  We may assume that $p$ is sufficiently large depending on $\eps$, since the claim is trivial otherwise.  By space reflection symmetry \eqref{space-reverse} it will suffice to prove \eqref{phiu-cons2}, \eqref{phiu-cons3}; by time reversal symmetry \eqref{time-reverse} we may assume that $t = \frac{u+v}{2}$ is non-negative.

We introduce an auxiliary parameter $0 \leq T \leq T_0$, and only prove the claim for $t = \frac{u+v}{2}$ between $0$ and $T$; setting $T=T_0$ will then yield the claim.  We establish this claim by induction on $T$, incrementing $T$ by steps of $\tau$ at a time.  

Let us first handle the base case when $0 \leq T \leq 2\tau$.  Fix $v$.  By sign reversal symmetry \eqref{sign-reverse} and Lemma \ref{hold} we may assume that $\phi\langle u,v \rangle \geq -1/2$ whenever $t = \frac{u+v}{2}$ has magnitude at most $100\tau$.  Applying Lemma \ref{apc}, we conclude that
$$
(\frac{1}{2} \phi_u^2 - \frac{1}{p} \phi_{uu})\langle u, v \rangle = (\frac{1}{2} \phi_u^2 - \frac{1}{p} \phi_{uu})\langle u, -u \rangle + O\left(\frac{\log p}{p}\right)
$$
for all $-v-10\tau \leq u \leq -v+10\tau$.  From the hypotheses (a), (b) and \eqref{nlw} we see that $\phi_{uu}\langle u, -u \rangle = O(1)$, and so we have
\begin{equation}\label{soda}
 \phi_{uu}\langle u, v\rangle = \frac{p}{2} \left( \phi_u^2\langle u,v\rangle - \phi_u^2\langle u,-u\rangle + O\left( \frac{\log p}{p} \right)\right).
\end{equation}
If we write $f(t) := \phi_u\langle -v+2t,v\rangle$, $g(t) := \phi_u\langle -v+2t,v \rangle$ we thus have
\begin{equation}\label{fft}
 f'(t) = p\left( f(t)^2 - g(t)^2 + O\left( \frac{\log p}{p} \right) \right)
\end{equation}
for all $-5\tau \leq t \leq 5\tau$.  

Let $J := \{ 0 \leq t \leq T: -v+2t \in I\}$, thus $J$ is a (possibly empty) interval such that $\eps \leq |g(t)| \ll 1$ for all $t \in J$.  Also observe from the smoothness of the initial data and \eqref{nlw} that $g'(t) = O(1)$ for all $t \in J$.  Also from \eqref{phibound-1} we have $f(t) = O(1)$ for all $t \in J$.

Suppose first that $f(t_0) > 0$ and $f(t_0)^2 \geq g(t_0)^2 + C \frac{\log p}{p}$ for some $t_0 \in J$ and some sufficiently large $C$.  Then from the bounds on $g$ and \eqref{fft}, we have
$$ \partial_t (f(t_0)^2-g(t_0)^2) \gg_\eps p (f(t_0)^2 - g(t_0)^2).$$
A continuity argument (using Gronwall's inequality) then shows that $f(t)^2 - g(t)^2$ increases exponentially fast (with rate $\gg_\eps p$) as $t$ increases.  Since $f(t)^2 - g(t)^2$ is $O(1)$ and was $\gg \frac{\log p}{p}$ at $t_0$, we arrive at a contradiction unless $t_0$ lies within $O_{\eps}( \frac{\log p}{p})$ of the boundary of $J$.  Similarly if $f(t_0) < 0$ and $f(t_0)^2 \geq g(t_0)^2 + C \frac{\log p}{p}$  for some $t_0 \in J$.  We conclude that
$$ f(t)^2 \leq g(t)^2 + O\left( \frac{\log p}{p} \right)$$
for all $t \in J$ except for those $t$ which are within $O_{\eps}( \frac{\log p}{p} )$ of the boundary of $J$.

Now suppose that $f(t)^2 \leq \eps^2/2$, then we see from \eqref{fft} and the bounds on $f, g$ that $-f'(t) \gg_{T,\eps} p$; thus the set of $t \in J$ for which this occurs must be contained in a single interval of length $O_{\eps}(\frac{1}{p})$.  

Next, if $\eps^2/2 \leq f(t)^2 \leq g(t)^2 - C \frac{\log p}{p}$, then from \eqref{fft} we obtain a bound of the form
$$ \pm \partial_t (g(t)^2-f(t)^2) \gg_\eps p (g(t)^2 - f(t)^2),$$
where $\pm$ is the sign of $f(t)$.  Applying the continuity and Gronwall argument again, either forwards or backwards in time as appropriate, we see that this event can only occur either within $O_{\eps}(\frac{\log p}{p})$ of the boundary of $J$, or on an interval of length $O_{\eps}(\frac{\log p}{p})$ adjacent to the interval where $f(t)^2 \leq \eps^2/2$.

Putting all of this together, we see that $f(t)^2 = g(t)^2 + O\left( \frac{\log p}{p} \right)$ for all $t \in J$ outside of at most $O(1)$ intervals of length $O_{\eps}(\frac{\log p}{p})$.  This gives the desired bound \eqref{phiu-cons2}.  The bound \eqref{phiu-cons3} then follows from \eqref{soda}.

Now suppose inductively that $T > 2\tau$, and that the claim has already been shown for $T-\tau$.  By inductive hypothesis we only need to establish the claim for $t \in [T-\tau,T]$.  Fix $v$.  By sign reversal symmetry \eqref{sign-reverse} and Lemma \ref{hold} we may assume that $\phi\langle u,v \rangle \geq -1/2$ whenever $t = \frac{u+v}{2}$ lies within $100\tau$ of $T$.

We can now repeat the previous arguments, except that the interval $J$ must first be subdivided by removing the $O(1)$ subintervals of $J$ of length $O_{\eps}(\frac{\log p}{p})$ for which \eqref{phiu-cons2}, \eqref{phiu-cons3} (with $v$ replaced by $v-\tau$) already failed (and which are provided by the inductive hypothesis), and then working on each remaining subinterval of $J$ separately.  Note on each such interval we still have the ODE \eqref{fft} (using the inductive hypothesis \eqref{phiu-cons2}, \eqref{phiu-cons3} as a substitute for control of the initial data).  We omit the details. 
\end{proof}

Now we handle the opposing case when $\phi$ has small derivative.

\begin{lemma}[Piecewise convergence, degenerate case]\label{piece-2}  Let $\eps > 0$, and let $I\subset [-T_0,T_0]$ be an interval such that $|\phi^{(\lin)}_u\langle u, v \rangle| \leq \eps$ for all $u \in I$ (again, $v$ is irrelevant).  Then one has
\begin{equation}\label{phiu-cons2a}
|\phi_u\langle u,v\rangle| \ll \eps 
\end{equation}
and
\begin{equation}\label{phiu-cons3b}
|\phi_{uu}\langle u,v \rangle| \ll \eps^2 p
\end{equation}
whenever $\langle u,v \rangle \in \diamondsuit_{T_0}$ and $u \in I$.  Similarly with the roles of $u$ and $v$ reversed.
\end{lemma}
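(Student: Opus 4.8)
The plan is to prove \eqref{phiu-cons2a}, \eqref{phiu-cons3b} by the method of characteristics, in the spirit of Lemmas~\ref{ponder} and~\ref{piece}. First I would make the standard reductions: we may assume $\eps$ is smaller than an absolute constant (otherwise the bounds are immediate from \eqref{phibound-1}, \eqref{phibound-2}) and that $p$ is large depending on $\eps$ and the data; by space reflection symmetry \eqref{space-reverse} it suffices to control $\phi_u$ and $\phi_{uu}$, and by time reversal \eqref{time-reverse} we may take $t=\frac{u+v}{2}\geq 0$. I would then aim only at $|\phi_u\langle u,v\rangle|\ll\eps$ for $\langle u,v\rangle\in\diamondsuit_{T_0}$ with $u\in I$: the derivative bound \eqref{phiu-cons3b} comes along automatically, since near a barrier the rescaled conservation law (Lemma~\ref{apc}) keeps the appropriate quantity $\frac12\phi_u^2\mp\frac1p\phi_{uu}$ within $O(\log p/p)$ of its value on the initial surface $t=0$ --- where $\phi_u=\phi^{(\lin)}_u=O(\eps)$ and $\phi_{uu}=O(1)$, the latter because by \eqref{nlw} and hypotheses (a),(b) the nonlinear term $|\phi_0|^{p-1}\phi_0$ is exponentially small so $\phi_{uu}(0,x)=\frac12(\phi_0''+\phi_1')(x)+o(1)$ --- hence that quantity is $\ll\eps^2$, and together with $|\phi_u|\ll\eps$ this forces $|\phi_{uu}|\ll\eps^2 p$. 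Away from the barriers the nonlinearity is exponentially small and both bounds are trivial.

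For $|\phi_u|\ll\eps$ I would induct on time, incrementing an auxiliary parameter $T$ by a small step $\tau$ (depending on the data, $T_0$ and $\eps$), exactly as in the proof of Lemma~\ref{piece}, with the hypothesis on $I$ playing the role of the initial control; it is convenient to work with a slight enlargement $I_\eps\supset I$ on which still $|\phi^{(\lin)}_u|\leq 2\eps$, which is possible by uniform continuity of $\phi^{(\lin)}_u$ for $\tau$ small, and harmless since only $O_\eps(1)$ steps are made. Within one step I would follow the characteristic $\{\langle u,v'\rangle\}$ over a time-interval of length $\tau$; by Lemma~\ref{hold}, for $\tau$ small $\phi$ varies by at most $0.1$ along it, so $\phi$ stays in one of three regimes: (a) $|\phi|\leq 0.9$, where $|\phi|^{p-1}\phi=O(0.9^{p-1})$ so $\phi_u,\phi_{uu}$ are essentially frozen and the inductive hypothesis at the base of the step suffices; (b) $\phi\geq 0.9$; and (c) $\phi\leq -0.9$, which reduces to (b) by the sign reversal symmetry \eqref{sign-reverse} (which preserves the hypothesis $|\phi^{(\lin)}_u|\leq\eps$).

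The heart of the matter is regime (b). There $\phi>0$, so I would rescale $\phi=p^{1/(p-1)}(1+\frac1p\psi)$ as before, so that $\psi$ solves \eqref{psiuv}. From \eqref{psiuv} one has $\psi_{uv}\leq 0$, so $\psi_u$ is non-increasing in $v$; since $\psi_u=O(\eps)$ at the base of the step (inductive hypothesis/initial data), this already yields the upper bound $\psi_u\ll\eps$, i.e.\ $\phi_u\langle u,v\rangle\ll\eps$. For the matching lower bound I would invoke Lemma~\ref{apc} (whose hypothesis $\phi\geq -1/2$ holds throughout this regime) to get $|\frac12\psi_u^2-\psi_{uu}|\ll\eps^2+\log p/p\ll\eps^2$ along the characteristic --- propagating the $O(\eps^2)$ value at the base --- so that if $\psi_u\leq -C'\eps$ at some point of the characteristic for a large constant $C'$, then $\psi_{uu}\geq\frac12(C'\eps)^2-O(\eps^2)>0$ there; then the continuity argument of Lemma~\ref{ponder} (move backwards in $u$, along which $\psi_{uu}$ stays non-negative so $\psi_u$ keeps decreasing) carries this down to the base surface $t=T-\tau$ and contradicts the inductive hypothesis $\psi_u\geq -O(\eps)$ there, provided $C'$ is large enough. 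This closes the induction. The derivative bound \eqref{phiu-cons3b} then reads off from $|\psi_u|\ll\eps$ and $|\frac12\psi_u^2-\psi_{uu}|\ll\eps^2$.

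The obstacles are organizational rather than analytic. The backwards-in-$u$ continuity argument in regime (b) lands on the base surface at a $u$-coordinate within $O(\tau)$ of $u\in I$, which may fall just outside $I$; this is exactly why one passes to the enlargement $I_\eps$ and takes $\tau$ small depending on $\eps$. Moreover, over a long characteristic $\phi$ may move between the two barriers repeatedly; each transition occurs inside a regime-(a) stretch, where $|\phi|^{p-1}\phi$ is exponentially small, so $\phi_u$, $\phi_{uu}$ and both quantities $\frac12\phi_u^2\pm\frac1p\phi_{uu}$ are essentially frozen, allowing one to switch from tracking the $-$ version (near $\phi=+1$) to the $+$ version (near $\phi=-1$) at no cost. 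I expect the main difficulty to be verifying that the $O(\log p/p)$ errors accumulated over the $O_\eps(1)$ induction steps and regime changes still sum to $\ll\eps^2$ --- which is precisely what makes the ``$p$ large depending on $\eps$'' hypothesis necessary. The remaining computations are routine and I would omit them, as is done for Lemma~\ref{piece}.
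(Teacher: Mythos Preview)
Your proposal is correct and follows essentially the same strategy as the paper: induct on an auxiliary time $T$ in steps of $\tau$, use Lemma~\ref{apc} (together with the inductive hypothesis at the base surface) to obtain $\frac12\phi_u^2-\frac1p\phi_{uu}=O(\eps^2)$ in the relevant region, and then run an ODE argument in $u$ (at fixed $v$) to force $|\phi_u|\ll\eps$, from which $|\phi_{uu}|\ll\eps^2 p$ follows immediately.

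The only noteworthy difference is in the ODE step. The paper does not split into an upper bound (via the monotonicity $\psi_{uv}\leq 0$) and a lower bound (via the backwards-in-$u$ continuity argument borrowed from Lemma~\ref{ponder}); instead it observes that $f(u):=\phi_u\langle u,v\rangle$ obeys $f'(u)=\tfrac{p}{2}(f(u)^2+O(\eps^2))$ and invokes a Riccati blow-up: if $|f(u_0)|>C\eps$ then $f'\geq \tfrac{p}{4}f^2$ persists and $f$ blows up (forward or backward in $u$) within $O_\eps(1/p)$, contradicting $|f|\ll 1$ from \eqref{phibound-1}. This avoids tracking where the backwards path lands, so the paper can keep $\tau$ fixed (independent of $\eps$, handling $\eps\geq\tau$ trivially) and work on the $\eps$-neighbourhood of $J$ rather than your enlargement $I_\eps$. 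Your route is equally valid, but the bookkeeping you flag---ensuring the backwards-in-$u$ drift stays inside $I_\eps$, and that the conservation law bound is available at every $\langle u',v\rangle$ along that path---is exactly what the paper's blow-up argument sidesteps.
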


\begin{proof} This is very similar to Lemma \ref{piece}, in that we first establish the base case $0 \leq T \leq 4\tau$ and then induct by steps of $\tau$, where $\tau > 0$ is a fixed timestep independent of $T$ and $p$.  Whereas in the proof of Lemma \ref{piece} we did the base case in detail and left the inductive step to the reader, here we shall leave the base case to the reader and do the inductive step in detail.  Thus, assume $T > 4\tau$ and that the claim has already been proven for $T-\tau$ and $T-2\tau$.  We may assume $\eps < \tau$ since the claim follows from \eqref{phibound-1}, \eqref{phibound-2} otherwise.

By inductive hypothesis and time and space reversal symmetry \eqref{time-reverse}, \eqref{space-reverse} we only need to establish the claims \eqref{phiu-cons2a}, \eqref{phiu-cons3b} for $t = \frac{u+v}{2} \in [T-\tau,T]$.  By the sign reversal symmetry \eqref{sign-reverse} and Lemma \ref{hold} we may assume that $\phi\langle u,v \rangle > -1/2$ whenever $t \in [T-100\tau,T+100\tau]$.  We can assume that $p$ is large compared to $T_0$, $\eps$, and the initial data since the claim is vacuous otherwise.

Let $J := \{ u \in I: t = \frac{u+v}{2} \in [T-\tau,T] \}$.
Observe (from the smoothness of the initial data) that $|\phi_u\langle u, u \rangle| \ll \eps$ whenever $u$ lies within $\eps$ of $J$.  By inductive hypothesis (replacing $\eps$ by $O(\eps)$), we conclude that
$$ |\phi_u\langle u, v-2\tau \rangle )| \ll \eps $$
and
$$ |\phi_{uu}\langle u, v-2\tau\rangle )| \ll \eps^2 p $$
for all in the $\eps$-neighbourhood of $J$.  Applying Lemma \ref{apc}, we conclude that
\begin{equation}\label{jump}
(\frac{1}{2} \phi_u^2 - \frac{1}{p} \phi_{uu})\langle u, v\rangle = O( \eps^2 )
\end{equation}
for all $t$ in the $\eps$-neighbourhood of $J$.  Thus, if $f(u) := \phi_u\langle u, v \rangle$, then we have
$$ f'(u) = p ( f^2(u) + O(\eps^2) )$$
for all $u$ in the $\eps$-neighbourhood of $J$.

The ODE $f'(u) = \frac{p}{2} f^2(u)$ blows up either forward or backward in time within a duration of $O_\eps(1/p)$ as soon as $|f(u)|$ exceeds $\eps$.  From this and a continuity and comparison argument, we see that $|f(u)|$ cannot exceed $C \eps$ for $u \in J$ for some constant $C$ depending only on $u$, thus $f(u) = O(\eps)$ for all $u \in J$.  Applying \eqref{jump} we close the induction as required; the base case is similar.
\end{proof}

\begin{remark} The \emph{a priori} estimates here did not use the full force of the hypotheses (a)-(c); the condition (c) was not used at all, and the strict barrier condition (b) could be replaced by the non-strict condition $|\phi_0(x)| \leq 1$.  Also, a careful examination of the dependence of the implied constants on the initial data, combined with a standard limiting argument using the usual local-wellposedness theory reveals that (a) can be replaced with a $C^2 \times C^1$ condition on the initial data $(\phi_0,\phi_1)$.  However, we use the hypotheses (a)-(c) more fully in the uniqueness theory of the previous section, and the compactness arguments in the next section.
\end{remark}

\section{Compactness}\label{compact-sec}

Now we can prove Theorem \ref{main-thm}. Fix $T_0 > 0$.  From Lemma \ref{hold}, the solutions $\phi = \phi^{(p)}$ are equicontinuous and uniformly bounded on the region $\diamondsuit_{T_0} := \{ (t,x): |t|+|x| \leq T_0 \}$, and hence (by the Arzel\'a-Ascoli theorem) precompact in the uniform topology on this region.  In view of the uniqueness theory in Section \ref{unique-sec}, we see that to show Theorem \ref{main-thm}, it suffices to show that any limit point of this sequence obeys the properties (i)-(vi) on $\diamondsuit_{T_0}$.  Accordingly, let $p_n \to \infty$ be a sequence such that $\phi^{(p_n)}$ converges uniformly to a limit $\phi$.  

We can now quickly verify several of the required properties (i)-(vi).
From \eqref{phibound-0} we obtain the property barrier condition (iv), while from Lemma \ref{hold} we have the Lipschitz condition (i).  From the strict barrier hypothesis (b) and Lemma \ref{hold}, we know that the $|\phi^{(p_n)}|$ stay bounded away from $1$ in a neighbourhood of the initial interval $\{ (0,x): -T_0 \leq x \leq T_0 \}$, and so the nonlinearity $|\phi^{(p_n)}|^{p_n-1} \phi^{(p_n)}$ converges uniformly to zero in this neighbourhood.  Because of this and \eqref{nlw}, $\phi^{(p_n)}$ converges uniformly to $\phi^{(\lin)}$ in this neighbourhood, yielding the initial condition (iii).

\subsection{The defect measure condition}

Now we verify (v).  Suppose $(t_0,x_0)$ is a point in $\diamondsuit$ such that $|\phi(t_0,x_0)| < 1$.  Then by Lemma \ref{hold} and uniform convergence, we can find a neighbourhood $B$ of $(t_0,x_0)$ in $\diamondsuit$ and a constant $c < 1$ such that $|\phi^{(p)}(t,x)| \leq c$ for all $(t,x) \in B$ and all sufficiently large $p$.  In particular, the nonlinearity in \eqref{nlw} converges uniformly to zero on $B$ as $p \to \infty$.  Taking limits, we see that $-\phi_{tt} + \phi_{xx} = 0$ on $B$ in the sense of distributions.  Taking unions over all such $B$, and using null coordinates we conclude that
the distribution $-\phi_{tt}+\phi_{xx}$ is supported on the set $\{ (t,x) \in \diamondsuit: |\phi(t,x)| = 1 \}$.

Next, we consider the neighbourhood of a point $(t_0,x_0)$ where $\phi(t_0,x_0)=+1$ (say).  Then by Lemma \ref{hold} and uniform convergence, we can find a diamond $D$ centred at $(t_0,x_0)$ (with length bounded below uniformly in $(t_0,x_0)$) such that $\phi^{(p)}(t,x)$ is non-negative for all $(t,x) \in D$ and all sufficiently large $p$.  In particular, the nonlinearity in \eqref{phiuv} is non-negative on this diamond, which implies by the fundamental theorem of calculus that
$$ \phi^{(p)}( t,x ) - \phi^{(p)}(t-r,x-r) - \phi^{(p)}(t-s,x+s) + \phi^{(p)}(t-r-s,x-r+s) \geq 0$$
whenever $(t,x), (t-r,x-r), (t-s,x+s), (t-r-s,x-r+s)$ lie in $D$.  Taking uniform limits, we conclude that the same statement is true for $\phi$.  By the usual Lebesgue-Stieltjes measure construction (adapted to two dimensions) we thus see that
\begin{align*}
\phi( t,x ) - \phi(t-r,x-r) - &\phi(t-s,x+s) + \phi(t-r-s,x-r+s) \\
&= \mu_+( \{ (t-r'-s',x-r'+s': 0 \leq r' \leq r; 0 \leq s' \leq s \} )
\end{align*}
for some positive finite measure $\mu_+$ on $D$, which implies that $-\phi_{tt} + \phi_{xx} = \mu_+$ in the sense of distributions on $D$.  Similarly when $\phi(t_0,x_0)=-1$ (now replacing $\mu_+$ by $-\mu_-$).  Piecing together these diamonds $D$ and neighbourhoods $B$ we obtain the claim.

\subsection{The reflection condition}

Now we verify (vi).  By space reflection symmetry \eqref{space-reverse} it suffices to show \eqref{phiu}. 

Let us first consider the region where $\phi_u^{(\lin)}\langle u, v\rangle$ vanishes.  Applying Lemma \ref{piece-2} and taking weak limits, we see that $\phi_u\langle u,v \rangle$ vanishes almost everywhere when $\phi_u^{(\lin)}\langle u, v\rangle$ vanishes, which of course gives \eqref{phiu} in this region.    As the countable union of null sets is still null, it thus suffice to verify \eqref{phiu} for almost every $(t,x)$ in the parallelogram $P := \{ \langle u,v\rangle \in \diamondsuit_{T_0}: u \in I \}$, whenever $I$ is an interval such that 
$$|\phi_u^{(\lin)}\langle u,v\rangle| \geq \eps$$ 
for all $u \in I$ ($v$ is irrelevant) and some $\eps > 0$.  Applying Lemma \ref{piece} and taking square roots, we know that for all $p$, we have
\begin{equation}\label{phiu-osc}
|\phi^{(p)}_u \langle u,v \rangle| =  |\phi^{(\lin)}_u \langle u,v \rangle| + O_T( \frac{\log^{1/2} p}{p^{1/2}} )
\end{equation}
for all $\langle u, v \rangle \in \diamondsuit_{T_0}$ with $u \in I$, where we exclude for each fixed choice of $v$, a union $I_v \subset I$ of $O(1)$ intervals of length $O_{\eps}( \frac{\log p}{p} )$ from $I$.  

We would like to take limits as $p = p_n \to \infty$, but we encounter a technical difficulty: while we know that $\phi^{(p)}_u$ converges weakly to $\phi_u$, this does not imply that $|\phi^{(p)}_u|$ converges weakly to $|\phi_u|$, due to the possibility of increasing oscillation of sign\footnote{If one does not address this oscillation issue, one can only get the lower bound in \eqref{phiu} rather than equality.} in $\phi^{(p)}_u$.  The fact that \eqref{phiu-osc} only fails on a bounded number of short intervals for each $v$ rules out oscillation in the $u$ direction, but one must also address the issue of oscillation in the $v$ direction.  Fortunately, from \eqref{phiuv} we have some monotonicity of $\phi^{(p)}_u$ in $v$ that allows us to control this possibility.

We turn to the details.  As $\phi$ is Lipschitz, we can cover the parallelogram $P$ by a bounded number of open diamonds $D$ in $P$, on which each $\phi$ varies by at most $0.1$ (say).  If $\phi$ takes any value between $-1/2$ and $1/2$ on a diamond $D$, then by \eqref{phiuv} $\phi$ solves the free wave equation on $D$, so in particular $\phi_u$ is constant in $v$ (and agrees with $\frac{1}{2} (\phi_1 + \partial_x \phi_0)$ whenever the diamond intersects the initial surface $\{t=0\}$).  Thus it suffices to establish the claim on those diamonds $D$ on which $\phi$ avoids the interval $[-1/2,1/2]$; by the symmetry $\phi \to -\phi$ we may assume that $\phi \geq 1/2$ on $D$, and hence (for $n$ large enough) $\phi^{(p_n)}$ is also positive.  By \eqref{phiuv}, we conclude that $\phi^{(p_n)}_u$ is decreasing in the $v$ direction.

Let $\delta > 0$ be a small number.  We can partition the diamond $D$ into $O_T(\delta^{-2})$ subdiamonds of length $\delta$ in a regular grid pattern.  Fix $n$ sufficiently large depending on $\delta,\eps$, and call a subdiamond \emph{totally positive} (with respect to $n$) if $\phi^{(p_n)}_u > 0$ at every point on this subdiamond; similarly define the notion of a subdiamond being \emph{totally negative}.  Call a subdiamond \emph{degenerate} if it is neither totally positive nor totally negative (i.e. it attains a zero somewhere in the diamond).  We claim that at most $O_{\eps}(\delta^{-1})$ degenerate subdiamonds.  To see this, let $d$ be a degenerate sub-diamond.  Since $\phi^{(p_n)}_u$ is decreasing in the $v$ direction, we know that $\phi^{(p_n)}_u$ must be negative in at least one point on the northwest edge of $d$, and positive in at least one point on the southeast edge.  Suppose that $\phi^{(p_n)}_u$ is negative at every point on the northwest edge.  Then from the monotonicity of $\phi^{(p_n)}_u$ in the $v$ direction, we see that there can be at most one degenerate subdiamond of this type on each northwest-southeast column of subdiamonds; thus there are only $O(\delta^{-1})$ subdiamonds of this type.  Thus we may assume that $\phi^{(p_n)}_u$ changes sign on the northwest edge.  But on the line $\ell$ that that edge lies on, we have \eqref{phiu-osc} holding in $D$ outside of $O(1)$ intervals of length $O_{\eps}( \frac{\log p_n}{p_n} )$; also, by hypothesis, we have 
$$ |\phi^{(\lin)}_u\langle u,v\rangle| \geq \eps$$ 
for $\langle u,v\rangle$ in $D$.  We conclude (for $n$ large enough) that there are at most $O(1)$ subdiamonds with northwest edge lying on this line $\ell$ for which $\phi^{(p_n)}_u$ changes sign on this edge.  Summing over all $O(\delta^{-1})$ possible edges, we obtain the claim.

Fix $\delta$, and let $n \to \infty$.  The set of subdiamonds on which $\phi^{(p_n)}$ is totally positive or totally negative can change with $n$; however there are only a finite number of possible values for this set for fixed $\delta$.  Hence, by the infinite pigeonhole principle, we may refine the sequence $p_n$ and assume that these sets are in fact independent of $n$.  For any totally positive or totally negative diamond, $\phi^{(p_n)}_u$ has a definite sign; since $\phi^{(p_n)}_u$ converges weakly to $\phi_u$, we conclude that $|\phi^{(p_n)}_u|$ converges weakly to $|\phi_u|$.  Since there are no sign changes on this diamond, \eqref{phiu-osc} must hold throughout the subdiamond (by the intermediate value theorem); we thus conclude that \eqref{phiu} holds on any such subdiamond.  Since the measure of all the degenerate sub-diamonds is $O_{\eps}(\delta)$, we conclude that \eqref{phiu} holds on $D$ outside of a set of measure $O_{\eps}(\delta)$.  Letting $\delta \to 0$ we obtain the claim.

\subsection{Piecewise smoothness}

The only remaining property we need to verify is (ii).  By spatial reflection symmetry \eqref{space-reverse}, it suffices to show that for each $v \in [-T_0,T_0]$, the map $u \mapsto \phi \langle u, v \rangle$ is piecewise smooth on $[-T_0,T_0]$, with only finitely many pieces.

From (c), we know that $\phi^{(\lin)}_u \langle u, v \rangle$ vanishes for $u$ in a finite union of intervals and points in $[-T_0,T_0]$.  On any one of these intervals, we know from (vi) that $\phi_u \langle u,v\rangle$ also vanishes almost everywhere, which by the Lipschitz nature of $\phi$ and the fundamental theorem of calculus ensures that $\phi \langle u,v \rangle$ is constant in $u$ on each of these intervals, for any fixed $v$.  So it will suffice to verify the piecewise smoothness of $u \mapsto \phi \langle u,v \rangle$ for any $v$ and on any compact interval $I$ of $u$ for which $\phi^{(\lin)}_u \langle u, v \rangle$ is bounded away from zero, so long as the number of pieces is bounded uniformly in $I$ and $v$.

Fix $I$ and $v$.  By hypothesis, we can find $\eps > 0$ such that $|\phi^{(\lin)}_u \langle u, v \rangle| \geq \eps$ for all $u \in I$; in particular, $\phi^{(\lin)}_u$ does not change sign on this interval.  By Lemma \ref{piece}, we conclude for each $n$ that $|\phi^{(p_n)}_u\langle u, v \rangle| = |\phi^{(\lin)}_u\langle u, v \rangle| + O\left( \frac{\log^{1/2} p_n}{p_n^{1/2}} \right)$  for $u \in I$ outside of $O(1)$ intervals of length $O_\eps( \frac{\log p_n}{p_n} )$ intersecting $I$.
  
By pigeonholing, we may assume that the number $k = O(1)$ of such intervals is constant; denoting the midpoints of these intervals by $u^{(p_n)}_1 < \ldots < u^{(p_n)}_k$; without loss of generality we may take $u^{(p_n)}_1$ and $u^{(p_n)}_k$ to be the endpoints of $I$.  We may assume from the Bolzano-Weierstrass theorem and passing to a further subsequence that each of the $u^{(p_n)}_j$ converge to some limit $u_j$.  

Between $u^{(p_n)}_j$ and $u^{(p_n)}_{j+1}$, excluding those $u$ lying within $O_\eps( \frac{\log p_n}{p_n} )$ of either endpoint, we may write $\phi^{(p_n)}_u\langle u, v \rangle = \epsilon^{(p_n)}_j \phi^{(\lin)}_u\langle u, v \rangle + O\left( \frac{\log^{1/2} p_n}{p_n^{1/2}} \right)$, where $\epsilon^{(p_n)}_j \in \{-1,+1\}$.  By a further pigeonholing we may take $\epsilon^{(p_n)}_j = \epsilon_j$ independent of $n$.  Using the fundamental theorem of calculus and then taking limits, we conclude that $\phi\langle u,v\rangle$ is piecewise smooth for $u \in I$, with possible discontinuities at $u_1,\ldots,u_k$, and with $\phi\langle u,v\rangle$ equal to $\epsilon_j \phi^{(\lin)}_u\langle u,v\rangle$ on the interval $(u_j,u_{j+1})$ for any $1 \leq j < k$.  The claim follows, and the proof of Theorem \ref{main-thm} is complete.

\end{document}